\newcounter{theorem}
\newtheorem{thm}[theorem]{Theorem}
\newtheorem{prop}[theorem]{Proposition}
\newtheorem{cor}[theorem]{Corollary}
\newtheorem{defn}[theorem]{Definition}
\theoremstyle{remark}
\newtheorem*{remark*}{Remark}
\newtheorem{remark}[theorem]{Remark}
\newtheorem{example}[theorem]{Example}
\numberwithin{equation}{section}
\numberwithin{theorem}{section}
\DeclareMathOperator\spa{span}
\DeclareMathOperator\diam{diam}
\newcommand{\alabel}{\label}
\newcommand{\defemph}{\emph}
\newcommand{\R}{\mathbb{R}}
\newcommand{\Q}{\mathbb{Q}}
\newcommand{\Z}{\mathbb{Z}}
\newcommand{\N}{\mathbb{N}}
\newcommand{\A}{\mathcal{A}}
\newcommand{\TS}{\Omega}
\newcommand{\SSS}{\Sigma}
\newcommand{\tilet}{t}
\newcommand{\wdw}{W}
\newcommand{\wdu}{U}
\newcommand{\lru}{r}
\newcommand{\lrv}{s}
\newcommand{\lrx}{x}
\newcommand{\lry}{y}
\newcommand{\lrz}{z}
\newcommand{\lrw}{t}
\newcommand{\sub}{\phi}
\newcommand{\shift}{\gamma}
\newcommand{\seqshift}{\sigma}
\newcommand{\inflation}{\lambda}
\newcommand{\boundary}{\partial}
\newcommand{\supp}{Supp}
\newcommand{\collar}{L}
\newcommand{\cellc}{c}
\newcommand{\cochainc}{c'}
\newcommand{\tile}{t}
\newcommand{\tiling}{T}
\newcommand{\vectorv}{v}
\newcommand{\basisb}{{\mathfrak B}}
\newcommand{\family}{F}
\newcommand{\seq}{s}
\newcommand{\seqr}{r}
\newcommand{\complexheight}{6cm}
\newcommand{\fullcomplexheight}{3.7cm}
\title[Cohomology of Mixed Substitution Tiling Spaces]{Cohomology of One-dimensional Mixed Substitution Tiling Spaces}
\author{Franz G\"{a}hler}
\address{Bielefeld University}
\thanks{Both authors were partly supported by the German Research Council (DFG), via CRC 701}
\author{Gregory R. Maloney}
\address{University of Massachusetts Boston}
\begin{document}

\subjclass[2010]{Primary: 37B10, 55N05
Secondary: 54H20, 37B50, 52C23}
\keywords{Cohomology, tiling spaces, substitution}
\date{March 5, 2012}

\begin{abstract}
We compute the Cech cohomology with integer coefficients of one-dimensional tiling spaces arising from not just one, but several different substitutions, all acting on the same set of tiles.   
These calculations involve the introduction of a universal version of the Anderson-Putnam complex.  
We show that, under a certain condition on the substitutions, the projective limit of this universal Anderson-Putnam complex is isomorphic to the tiling space, and we introduce a simplified universal Anderson-Putnam complex that can be used to compute Cech cohomology.  
We then use this simplified complex to place bounds on the rank of the first cohomology group of a one-dimensional substitution tiling space in terms of the number of tiles.
\end{abstract}

\maketitle

\section{Introduction and Definitions}\alabel{SEC:intro}

The purpose of this work is to investigate the cohomology groups of tilings obtained by mixing several different substitutions.  
Many of the results can be proven for tilings in arbitrary dimension, so the notation and definitions for general tiling spaces will be introduced in Sections \ref{SEC:tilings}, \ref{SEC:substitutions}, and \ref{SEC:mixed-substitutions}.  
Nevertheless, special attention will be given to the class of one-dimensional tilings, and in this context it is often easier to work with symbolic substitutions and subshifts; accordingly, the relevant notions for symbolic substitutions will be introduced in Section \ref{SEC:symbolic-substitutions}.  

This work follows the paper \cite{AP} very closely, and many of the definitions and notations relating to tilings are taken from that source.  
An excellent introduction to the theory of topology of tiling spaces can be found in \cite{S:book}.  
The definitions and notations relating to symbolic shift spaces are standard; see \cite{LM} for an introduction.  

\subsection{Tilings}\alabel{SEC:tilings}

A \defemph{tile} is a subset of $\R^d$ that is homeomorphic to the closed unit ball.  
A \defemph{partial tiling} $T$ is a set of tiles, any two of which intersect only on their boundaries (let us denote the boundary of a set $S$ by $\boundary(S)$).  
The \defemph{support} of $T$, denoted $\supp(T)$, is the union of its tiles.  
A \defemph{tiling} is a partial tiling, the support of which is $\R^d$.  
When we need different tiles that look alike, let us associate a label with each tile; in such cases, let us consider a tile to be an ordered pair consisting of the set and the label.  
Given a partial tiling $\tiling$ and a vector $u\in \R^d$, define the translation of $\tiling$ by $u$ to be
\[
\tiling + u = \{ \tile + u : \tile \in \tiling\},
\]
where, for a tile $\tile$, $\tile + u = \{ x + u : x\in \tile\}$.

Any set of tilings of $\R^d$ can be equipped with a metric, in which two tilings are close if, up to a small translation, they agree on a large ball around the origin.  
There are several ways to define a metric in this way, all of which give rise to the same topology.  
Let us use the metric defined in \cite{AP}: for any two tilings $\tiling, \tiling'$ of $\R^d$,
\begin{equation*}
\begin{split}
D(\tiling,\tiling') & := \inf\left( \{ 1/\sqrt{2} \} \cup \{\epsilon : \tiling + u \textrm{ and } \tiling' + v \textrm{ agree on } B_{1/\epsilon}(0) \right. \\%
& \quad \left.  \textrm{ for some } \| u\|, \| v\| < \epsilon\}\right).
\end{split}
\end{equation*}

With respect to the topology arising from this metric, the action of $\R^d$ by translation is continuous.

\subsection{Substitutions}\alabel{SEC:substitutions}

Let $\{ p_1, \ldots , p_k\}$ be a finite set of tiles, called \defemph{prototiles}.  
Let $\tilde{\TS}$ denote the set of all partial tilings that contain only translates of these prototiles.  
A \defemph{substitution} $\sub$ is a map from $\{p_1, \ldots, p_k\}$ to $\tilde{\TS}$ for which there exists an \defemph{inflation constant} $\inflation > 1$ such that, for all $i \leq k$, the support of $\sub(p_i)$ is $\inflation p_i$.  
Then $\sub$ can be extended to a map $\sub : \tilde{\TS} \to \tilde{\TS}$ by 
\[
\sub(\tiling) = \bigcup_{p_i+u\in\tiling}(\sub(p_i) + \inflation u).
\]
Then the \defemph{tiling space} or \defemph{hull} $\TS_\sub$ is the set of all tilings $\tiling \in \tilde{\TS}$ such that, for any partial tiling $P\subseteq \tiling$ with bounded support, we have $P \subseteq \sub^n(p_i+u)$ for some prototile $p_i$ and some vector $u$.  
Note that $\sub(\TS_\sub) \subseteq \TS_\sub$.  

\begin{remark}\label{REM:conditions}
The substitution tiling spaces considered in \cite{AP} all satisfy the following three conditions.
\begin{enumerate}
\item  $\sub$ is one-to-one on $\TS_\sub$.  
This is required in order for $\sub|_{\TS_\sub}$ to have an inverse.  
By \cite{S:aperiodic}, $\sub$ is one-to-one on $\TS_\sub$ if and only if $\TS_\sub$ consists only of non-periodic tilings.  
\item  $\sub$ is primitive.  
This means that there exists some $n\geq 1$ such that, for any two prototiles $p_i, p_j$, some translate of $p_i$ appears in $\sub^n(p_j)$.  
\item  $\TS_\sub$ has \defemph{finite local complexity} (FLC).  
This means that, for each positive real number $R$, there are, up to translation, only finitely many partial tilings that are subsets of tilings in $\TS_\sub$ and that have supports with diameter less than $R$.
\end{enumerate}

Let us consider only substitutions that satisfy these three conditions, in addition to the following extra condition, which is a hypothesis of some of the theorems in \cite{AP}.  
\begin{enumerate}
\setcounter{enumi}{3}
\item  The prototiles of $\sub$ have a $CW$-structure with respect to which the tilings in $\TS_\sub$ are \defemph{edge-to-edge}, which means that, given any two subcells of tiles in the same tiling, their intersection is a union of subcells.
\end{enumerate}
\end{remark}

%Notice that, if the prototiles have a $CW$-structure, then the tilings in $\TS_\sub$ inherit such a structure from the prototiles.  
%Also, $\sub$ defines a map on subcells in the obvious way, by saying, for a subcell $\cellc$ of a prototile $\tile$, that $\sub(\cellc)$ is the set of all subcells of $\sub(\tile)$ contained in $\inflation \cellc$.  

\subsection{Mixed Substitution Systems}\alabel{SEC:mixed-substitutions}

The goal in this section is to generalize the notion of a hull by allowing more than one substitution to be used.  
This motivates the following definition.  
\begin{defn}\alabel{DEF:multi-sub}
Let $\family = \{ \sub_1,\sub_2,\ldots , \sub_k\}$ be a finite set of substitutions all acting on the same set of prototiles $\{ p_1, p_2, \ldots , p_l\}$ in $\R^d$, and consider an infinite sequence $\seq = (\seq_1, \seq_2,\seq_3,\ldots ) \in \{1, 2, \ldots , k\}^\N$.  
As before, let $\tilde{\TS}$ denote the set of all partial tilings containing only translates of the prototiles.  
Then the \defemph{mixed substitution space} or \defemph{hull} of $\family$ and $\seq$ is denoted by $\TS_{\family,\seq}$ and consists of all tilings $\tiling$ in $\tilde{\TS}$ for which every $P \subseteq \tiling$ with bounded support is contained in $\sub_{\seq_1}\sub_{\seq_2}\cdots \sub_{\seq_n}(p_i+u)$ for some natural number $n$, some prototile $p_i$, and some translation vector $u$.  
\end{defn}
The inflation factors for the substitutions $\sub_1,\ldots,\sub_k$ might be different.  
In the notation of Definition \ref{DEF:multi-sub}, let us denote the inflation factor of $\sub_{\seq_i}$ by $\inflation_{\seq_i}$.  
\begin{remark}
It remains to be shown that $\TS_{\family,\seq}$ is non-empty; the proof of this fact appears below, for the special case in which $(\family,\seq)$ is primitive (see Definition \ref{DEF:mixed-primitive}).  
\end{remark}

Let us assume further that the substitutions in $\family$ also satisfy the following compatibility condition.  
\begin{defn}\alabel{DEF:compatible}
Let $P = \{ p_1, p_2, \ldots , p_l\}$ be a set of prototiles, each of which has a $CW$-structure, and let $\family = \{ \sub_1, \sub_2, \ldots , \sub_k\}$ be a finite family of substitutions on $P$.  
$\family$ is \defemph{compatible} if, for all $i\leq l$, for all $n\in\N$, and all $(\seqr_1,\seqr_2,\ldots,\seqr_n)\in\{1,2,\ldots,k\}^n$, the intersection of any tile $\tilet\in\sub_{\seqr_1}\sub_{\seqr_2}\cdots\sub_{\seqr_n}(p_i)$ with any other such tile is a union of subcells of $\tilet$.  
\end{defn}

\begin{remark}
If $\family$ is compatible in the sense of Definition \ref{DEF:compatible}, then, for each sequence $\seq$, the tilings in $\TS_{\family,\seq}$ will be edge-to-edge in the sense defined in Section \ref{SEC:substitutions}.  

The compatibility property is automatic for one-dimensional tiles.  
\end{remark}
As before, this hull can be equipped with the tiling space topology, with respect to which $\R^d$ acts continuously by translation.  
Then there is a natural extension of the definition of primitivity that is sufficient to guarantee that the $\R^d$ action be minimal.
\begin{defn}\alabel{DEF:mixed-primitive}
Let $\family = \{ \sub_1,\sub_2,\ldots , \sub_k\}$ be a finite set of substitutions all acting on the same set of prototiles $\{ p_1, p_2, \ldots , p_l\}$ in $\R^d$, and let $\seq = (\seq_1, \seq_2,\seq_3,\ldots ) \in \{1, 2, \ldots , k\}^\N$ be an infinite sequence.  
$(\family,\seq)$ is called \defemph{primitive} if there exists $n$ such that, for all $i,j\leq l$ and all $m\in\N$, $\sub_{\seq_{m+1}}\sub_{\seq_{m+2}}\cdots $ $\sub_{\seq_{m+n}}(p_i)$ contains a translate of $p_j$.  

Let us say that $\family$ is primitive if there exists $n$ such that, for all $i,j\leq l$ and all $(\seqr_1,\seqr_2,\ldots ,\seqr_n) \in \{ 1,2\ldots , k\}^n$, $\sub_{\seqr_1}\sub_{\seqr_2}\cdots \sub_{\seqr_n}(p_i)$ contains a translate of $p_j$.  
\end{defn}
\begin{remark}
If $(\family,\seq)$ is primitive, then $(\TS_{\family,\seq},\R^d)$ is a minimal dynamical system.  
If $\family$ is primitive, then $(\family,\seq)$ is primitive for every sequence $\seq$, so $(\TS_{\family,\seq},\R^d)$ is minimal for every hull $\TS_{\family,\seq}$, though not necessarily on the joint hull of several sequences $(\family, \seq)$.  

This generalizes the standard definition of primitivity because $\sub$ is primitive in the standard sense if and only if $\{ \sub\}$ is primitive in this sense.

If $\seq_1$ and $\seq_2$ are two sequences, then by minimality the two hulls $\TS_{\family,\seq_1}$ and $\TS_{\family,\seq_2}$ are either the same or disjoint.  
That they can be the same, can be seen with the two Fibonacci substitutions 
\[
\sub_1 : \begin{array}{ll} 
a & \mapsto  b\\
b & \mapsto ab
\end{array},\quad \begin{array}{ll}
a & \mapsto b\\
b & \mapsto ba
\end{array},
\]

which can be freely mixed, and always produce Fibonacci tilings.   

That they can be different can be seen in Example \ref{EX:long-example}, in which there appears a family $\family$ that is primitive in the sense of Definition \ref{DEF:mixed-primitive} (indeed, we may take $n = 4$), and for which some of the hulls $\TS_{\family,\seq_1}$ and $\TS_{\family,\seq_2}$ can be distinguished by the ranks of their cohomology groups.  
\end{remark}

If $\{ \sub_1,\sub_2,\ldots ,\sub_k\}$ is primitive, then each of $\sub_1, \sub_2,\ldots , \sub_k$ must be primitive by itself, but the converse of this is not true, as can be seen in the following example.
\begin{example}\alabel{EX:not-mixed-primitive}
Consider the two substitutions on $\A = \{ a, b\}$ given by
\[
\sub_1 : \left\{ \begin{array}{rcl}
a & \to & bb \\%
b & \to & aba %
\end{array}\right. ,%
\qquad % 
\sub_2 : \left\{ \begin{array}{rcl}
a & \to & aab \\%
b & \to & aa %
\end{array}\right. .%
\]
Then each of $\sub_1$ and $\sub_2$ is primitive, but $\sub_2\sub_1$ is not primitive, and so $\{ \sub_1, \sub_2\}$ is not primitive either.  
\end{example}

\begin{remark}
If $(\family,\seq)$ is primitive, then $\TS_{\family,\seq}$ is non-empty.  
The proof of this is a modification of the standard argument that is used to show that $\TS_\sub$ is non-empty for a primitive substitution $\sub$.  

For each $i\in \N$, let $\inflation_{\seq_i}$ be the inflation factor of the substitution $\sub_{\seq_i}$.  
The primitivity condition implies that there is some $N_1 > 0$ such that, for some prototile $p_1$, $\sub_{\seq_1}\cdots \sub_{\seq_{N_1}}(p_1)$ contains a translate $p_1+v_1$ of $p_1$ in its interior.  
Likewise, there is some $N_2 > 0$ such that $\sub_{\seq_{N_1+1}}\cdots \sub_{\seq_{N_1+N_2}}(p_1)$ contains a translate $p_1+v_2$ of $p_1$ in its interior.  
Proceding in this fashion results in a sequence of patches
\[
P_1\subseteq P_2\subseteq P_3 \subseteq \cdots
\]
containing the origin.  
Here $P_i = \sub_{\seq_{1}}\cdots\sub_{\seq_{N_1+\cdots +N_i}}(p_1)-v_1-\inflation_{\seq_1} v_2 - \cdots - \inflation_{\seq_1} \cdots \inflation_{\seq_{i-1}}v_i$.  
Substracting the vector $v_1 + \inflation_{\seq_1} v_2 + \cdots + \inflation_{\seq_1}\cdots \inflation_{\seq_{i-1}} v_i$ guarantees that $P_i\subseteq P_{i+1}$.  
Furthermore, the sequence of patches expands to cover $\R^d$, and so defines a tiling of $\R^d$.  
From its construction, it is clear that this tiling is in $\TS_{\family,\seq}$.  
\end{remark}
\begin{remark}\alabel{REM:mixed-conditions}
Let us now give four conditions for a mixed substitution system $\TS_{\family,\seq}$ that are analogous to the conditions for ordinary substitution systems described in Remark \ref{REM:conditions}.  
Condition 1 involves the shift operator $\seqshift$ on one-sided sequences, which is defined by
\[
\seqshift (\seq_1,\seq_2, \ldots ) = (\seq_2,\seq_3,\ldots).
\]
\begin{enumerate}
\item  The map $\sub_{s_i}$ is a one-to-one map from $\TS_{\family,\seqshift^{i}\seq}$ to $\TS_{\family,\seqshift^{i-1}\seq}$.  
\item  $(\family,\seq)$ is primitive.  
\item  $\TS_{\family,\seq}$ has FLC.  
\item  $\family$ is compatible.  
\end{enumerate}
Let us assume henceforth that all of the mixed substitution tiling spaces described here satisfy these four conditions.  
\end{remark}

\subsection{Symbolic Substitutions}\alabel{SEC:symbolic-substitutions}

There is a simple way of describing one-dimensional tiling systems in terms of purely symbolic information.  

Let $\A$ be a finite set of symbols, called an \defemph{alphabet}.  
Let $\A^n$ denote the set of all words of length $n$, the letters of which are elements of $\A$.  
Let $\A^*$ denote $\bigcup_{n\geq 1} \A^n$, the set of all words of any length, the letters of which are elements of $\A$.  
Let $|\wdw|$ denote the length of a word $\wdw$.  
Given a word $\wdw = \lrx_1 \lrx_2 \ldots \lrx_k$ and numbers $i \leq j \leq k$, let us denote by $\wdw_{[i,j]}$ the \defemph{subword} $\lrx_i \lrx_{i+1} \ldots \lrx_j$.  
If $i = j$, then let us write $\wdw_{[i]}$ instead of $\wdw_{[i,i]}$.  
If $\wdw = \lrx_1 \lrx_2 \ldots \lrx_k$ and $\wdu = \lry_1 \lry_2 \ldots \lry_l$ are words, then let $\wdw\wdu$ denote the \defemph{concatenation} of $\wdw$ and $\wdu$; that is, $\wdw\wdu = \lrx_1\lrx_2 \ldots \lrx_k \lry_1\lry_2\ldots \lry_l$.

Given a word $\wdw = \lrx_1 \lrx_2 \ldots \lrx_n \in \A^n$ and integers $1 \leq l \leq m \leq n$, let $\wdw_{[-m,-l]} = \lrx_{n-m+1}\lrx_{n-m+2}\ldots \lrx_{n-l+1}$.  
If $l = m$, then let us write $\wdw_{[-l]}$ instead of $\wdw_{[-l,-l]}$.  
Given two words $\wdw, \wdu \in \A^*$, let $\delta_{\wdw,\wdu}$ denote the \defemph{Kronecker delta function} of $\wdw$ and $\wdu$; that is,
\[
\delta_{\wdw,\wdu} = \left\{ \begin{array}{ll}
1 & \textrm{if } \wdw = \wdu \\
0 & \textrm{otherwise }
\end{array}\right. .
\]

A \defemph{symbolic substitution} on $\A$ is a map $\sub : \A \to \A^*$.  
Any substitution $\sub$ extends naturally to a map---let us also denote this by $\sub$---from $\A^*$ to $\A^*$, defined by setting $\sub(\lrx_1 \lrx_2 \ldots \lrx_k) = \sub(\lrx_1)\sub(\lrx_2)\ldots \sub(\lrx_k)$.

There is a notion of primitivity for symbolic substitutions that is exactly analogous to the notion of primitivity for substitution tiling spaces.  
A symbolic substitution $\sub$ is \defemph{primitive} if there exists some $n$ such that, for all $\lrx,\lry\in\A$, $\lrx$ occurs in $\sub^n(\lry)$.  

The \defemph{symbolic substitution space} $\SSS_\sub$ associated with a substitution $\sub$ is the set of all bi-infinite sequences of letters from $\A$ in which every finite subword occurs as a subword of $\sub^n (\lrx )$ for some $\lrx \in \A$ and some $n$.  
%When writing a bi-infinite sequence, let us denote the $0$th entry with a decimal point.

The substitution $\sub$ gives a set of tilings of $\R$ in the following way.  
Suppose $\A = \{ \lrx_1, \lrx_2, \ldots , \lrx_n\}$.  
The \defemph{substitution matrix} $A(\sub) = $ $(A_{ij}(\sub))$ of $\sub$ is the $n \times n$ matrix in which $A_{ij}(\sub)$ is the number of occurrences of $\lrx_i$ in $\sub (\lrx_j)$.  
Under the assumption that $\sub$ is primitive, some power of $A(\sub)$ contains strictly positive entries, so by the Perron-Frobenius theory, $A(\sub)$ has a leading eigenvalue $\inflation$ with a positive left eigenvector.  
To each letter $\lrx_i$, we can assign a tile---which is just an interval, the length of which is the $i$th entry of this left eigenvector.  
If two entries $i$ and $j$ of the eigenvector have the same value, let us distinguish between the associated tiles by labelling them $\lrx_i$ and $\lrx_j$.  
These labelled intervals are the tiles, and given a bi-infinite sequence in the symbolic substitution space of $\sub$, a tiling can be constructed in the obvious way, by adjoining these intervals end to end in the order specified by the sequence, with the origin located at the left endpoint of the first entry in the sequence.  
Then the \defemph{tiling space} $\TS_\sub$ associated with $\sub$ is the set of all translates of tilings constructed in this way from elements of the symbolic substitution space.  
The substitution $\sub$ acts on the set of tiles, and therefore also on $\TS_\sub$, by replacing each tile with a translated sequence of tiles, the total length of which is $\inflation$ times the length of the original tile.  
The purpose of choosing the left Perron-Frobenius eigenvector components as tile lengths is to guarantee that the tiles scale by the Perron-Frobenius eigenvalue under substitution.   

\begin{remark}
This discussion of substitution matrices and Perron-Frobenius theory is actually not limited to symbolic sequences, but applicable just as well to tile substitutions, even in higher dimensions (where tile length has to be replaced by area or volume).
\end{remark}

There is also a notion of a mixed symbolic substitution system.  
\begin{defn}\label{DEF:mixed-symbolic}
Let $\family = \{ \sub_1,\sub_2,\ldots , \sub_k\}$ be a finite set of symbolic substitutions all acting on the same alphabet $\A$, and consider an infinite sequence $\seq = (\seq_1, \seq_2,\seq_3,\ldots ) \in \{1, 2, \ldots , k\}^\N$.  
The \defemph{mixed symbolic substitution space} $\SSS_{\family,\seq}$ associated with $\family$ and $\seq$ is the set of all bi-infinite sequences of letters from $\A$ in which every finite subword occurs as a subword of $\sub_{\seq_1}\sub_{\seq_2}\cdots\sub_{\seq_n} (\lrx )$ for some $\lrx \in \A$ and some $n$.  
\end{defn}
\begin{remark}
Mixed symbolic substitution spaces are often referred to as \defemph{$s$-adic spaces} (see \cite{F:s-adic}, \cite{D:linearly-recurrent}).  
\end{remark}
\begin{remark}\alabel{REM:caveat}
It is not immediately clear that all mixed symbolic substitution spaces can be viewed as tiling spaces in the manner described above.  
In order to apply the topological techniques from the theory of tiling spaces (see Section \ref{SEC:ap-complex}) to the study of a mixed symbolic substitution space $\SSS_{\family,\seq}$, let us always assume that the substitution matrices $A(\sub_i)$ share a common left Perron-Frobenius eigenvector for all $1\leq i\leq k$, and therefore that the symbols $\lrx\in\A$ can be assigned well-defined tile lengths.  
This requirement might not be strictly necessary in order to apply the topological approach, but at the moment it is not completely clear that it can be dropped.  
\end{remark}

\subsection{The Anderson-Putnam Complex}\alabel{SEC:ap-complex}

In \cite{AP} a method is given for computing the Cech cohomology with integer coefficients of a substitution tiling space $\TS_\sub$.  
The idea is that the dynamical system $(\TS_\sub, \sub)$ is topologically conjugate to a certain inverse limit space with a right shift map.  
This inverse limit space is defined in terms of a certain cell complex, which we will describe now.

For a tiling $\tiling$ and a point $u\in\R^d$, define 
\[
\tiling(u) = \{ \tile \in \tiling : u\in\tile\}.
\]
This definition can be extended to subsets $U$ of $\R^d$:
\[
\tiling(U) = \bigcup_{u\in U}T(u).
\]

\begin{defn}\alabel{DEF:APcomplex}
Let $\TS$ be a tiling space.  
Given a tile $\tile$ in some tiling $\tiling$ in $\TS$, the set $\tiling(\tile)$ is called a \defemph{collared tile}.  

The \emph{Anderson-Putnam complex} of a tiling space $\TS$ is denoted by $AP(\TS)$, and consists of a certain topological space under a certain equivalence relation.  
The topological space is $\TS \times \R^d$ under the product topology, where the topology on $\TS$ is the discrete topology and the topology on $\R^d$ is the standard topology.  
The equivalence relation is the smallest relation $\sim$ that equates $(\tiling_1,u_1)$ and $(\tiling_2,u_2)$ if $\tiling_1(\tile_1)-u_1 = \tiling_2(\tile_2)-u_2$ for some tiles $\tile_1,\tile_2$ with $u_1\in\tile_1\in\tiling_1$ and $u_2\in\tile_2\in\tiling_2$.  
\end{defn}

\begin{remark}
The Anderson-Putnam complex can be defined for any tiling space, but it is particularly useful when dealing with substitution and mixed substitution spaces.  
This is because these tiling spaces, which are relatively complicated objects, can be shown to be isomorphic to inverse limits of Anderson-Putnam complexes, which are relatively simple objects.  

The next three propositions are all proved in \cite{AP} for the class of substitution tiling spaces.  
The arguments in \cite{AP} can all be used with minimal modifications to prove these more general statements for mixed substitution spaces.  
\end{remark}
\begin{prop}[Proposition 4.2 in \cite{AP}]\alabel{PROP:complex-map}
The map $\sub_{s_i}$ induces a continuous map $\shift_i$ from $AP(\TS_{\family,\seqshift^i\seq})$ to $AP(\TS_{\family,\seqshift^{i-1}\seq})$ defined by $\shift_i (\tiling, u) = (\sub_{\seq_i}(\tiling), \lambda_{\seq_i} u)$.
\end{prop}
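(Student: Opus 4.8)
The plan is to follow the proof of Proposition~4.2 in \cite{AP}: lift the prescribed formula to a map of the underlying product spaces, check that it is continuous there, verify that it respects the equivalence relations defining the two Anderson--Putnam complexes, and then invoke the universal property of the quotient topology. First I would record two elementary properties of $\sub_{\seq_i}$. By Condition~1 of Remark~\ref{REM:mixed-conditions}, $\sub_{\seq_i}$ maps $\TS_{\family,\seqshift^i\seq}$ into $\TS_{\family,\seqshift^{i-1}\seq}$; and directly from the definition of the extension of a substitution to partial tilings, $\sub_{\seq_i}(P-w)=\sub_{\seq_i}(P)-\inflation_{\seq_i}w$ for every partial tiling $P$ and vector $w$, while $\supp(\sub_{\seq_i}(\tile))=\inflation_{\seq_i}\tile$ for every tile $\tile$, so two substituted tiles $\sub_{\seq_i}(\tile)$ and $\sub_{\seq_i}(\tile')$ have overlapping supports if and only if $\tile\cap\tile'\neq\emptyset$.

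Next I would define $\tilde\shift_i\colon\TS_{\family,\seqshift^i\seq}\times\R^d\to\TS_{\family,\seqshift^{i-1}\seq}\times\R^d$ by $\tilde\shift_i(\tiling,u)=(\sub_{\seq_i}(\tiling),\inflation_{\seq_i}u)$. Since the tiling space carries the discrete topology, $\tiling\mapsto\sub_{\seq_i}(\tiling)$ is automatically continuous, and $u\mapsto\inflation_{\seq_i}u$ is linear, so $\tilde\shift_i$ is continuous; composing with the quotient map $q'$ onto $AP(\TS_{\family,\seqshift^{i-1}\seq})$ gives a continuous map $q'\circ\tilde\shift_i$.

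The main step is to show that $q'\circ\tilde\shift_i$ is constant on $\sim$-classes. Since $\sim$ on the source is the smallest equivalence relation containing the elementary relation that equates $(\tiling_1,u_1)$ and $(\tiling_2,u_2)$ whenever $\tiling_1(\tile_1)-u_1=\tiling_2(\tile_2)-u_2$ for tiles $u_1\in\tile_1\in\tiling_1$ and $u_2\in\tile_2\in\tiling_2$, it suffices to treat such a pair. Put $Q:=\tiling_1(\tile_1)-u_1=\tiling_2(\tile_2)-u_2$; applying $\sub_{\seq_i}$ and the intertwining identity gives $\sub_{\seq_i}(\tiling_1(\tile_1))-\inflation_{\seq_i}u_1=\sub_{\seq_i}(\tiling_2(\tile_2))-\inflation_{\seq_i}u_2=:\sub_{\seq_i}(Q)$. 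Now $\inflation_{\seq_i}u_1\in\inflation_{\seq_i}\tile_1=\supp(\sub_{\seq_i}(\tile_1))$ lies in some tile $\tile_1'\in\sub_{\seq_i}(\tile_1)$; choose the corresponding tile $\tile_2'\in\sub_{\seq_i}(\tile_2)$ containing $\inflation_{\seq_i}u_2$, so that $\tile_1'-\inflation_{\seq_i}u_1=\tile_2'-\inflation_{\seq_i}u_2$ inside the common patch $\sub_{\seq_i}(Q)$. Any tile of $\sub_{\seq_i}(\tiling_1)$ meeting $\tile_1'\subseteq\supp(\sub_{\seq_i}(\tile_1))$ lies in $\sub_{\seq_i}(\tile)$ for some $\tile\in\tiling_1$ whose substitution meets $\sub_{\seq_i}(\tile_1)$, hence with $\tile\cap\tile_1\neq\emptyset$, i.e.\ $\tile\in\tiling_1(\tile_1)$; therefore $(\sub_{\seq_i}(\tiling_1))(\tile_1')\subseteq\sub_{\seq_i}(\tiling_1(\tile_1))$, and symmetrically $(\sub_{\seq_i}(\tiling_2))(\tile_2')\subseteq\sub_{\seq_i}(\tiling_2(\tile_2))$. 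Since these two super-patches agree after translating by $-\inflation_{\seq_i}u_1$ and $-\inflation_{\seq_i}u_2$ respectively, and $\tile_1'-\inflation_{\seq_i}u_1=\tile_2'-\inflation_{\seq_i}u_2$, the collared tiles $(\sub_{\seq_i}(\tiling_1))(\tile_1')$ and $(\sub_{\seq_i}(\tiling_2))(\tile_2')$ agree up to the respective translations, so $(\sub_{\seq_i}(\tiling_1),\inflation_{\seq_i}u_1)\sim(\sub_{\seq_i}(\tiling_2),\inflation_{\seq_i}u_2)$. Hence $q'\circ\tilde\shift_i$ factors through a map $\shift_i\colon AP(\TS_{\family,\seqshift^i\seq})\to AP(\TS_{\family,\seqshift^{i-1}\seq})$ realizing the stated formula, and this map is continuous because $AP(\TS_{\family,\seqshift^i\seq})$ carries the quotient topology.

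I expect the only delicate point to be the boundary bookkeeping in the previous paragraph: when $\inflation_{\seq_i}u$ lies on a cell boundary, or on the boundary between several tiles of $\sub_{\seq_i}(\tile)$, the tile $\tile'$ is not unique, and one must check that all admissible choices of $(\tile_1',\tile_2')$ give the same class in the target complex, equivalently that the different choices are linked by a chain of elementary relations. This is exactly the step where the edge-to-edge/compatibility hypothesis (Condition~4 of Remark~\ref{REM:mixed-conditions}) and FLC (Condition~3) enter, and it is handled verbatim as in \cite{AP}; the remaining ingredients---continuity, the intertwining of $\sub_{\seq_i}$ with translation, and the localization of neighbouring tiles---are routine.
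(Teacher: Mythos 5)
The paper gives no proof of this proposition at all: it is imported verbatim from \cite{AP} (Proposition 4.2), with only the preceding remark that the arguments there carry over to the mixed setting with minimal modification. Your lift--check--quotient reconstruction is exactly that standard argument (and you correctly isolate the one delicate point, the non-uniqueness of the tile containing $\inflation_{\seq_i}u$ on cell boundaries, deferring it to \cite{AP} just as the paper does), so it takes essentially the same approach.
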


The next two theorems provide the necessary tools to compute the Cech cohomology with integer coefficients of $\TS_{\family,\seq}$.

\begin{thm}[Theorem 4.3 in \cite{AP}]
The space $\TS_{\family,\seq}$ is homeomorphic to the inverse limit space $\varprojlim_{\shift_i} AP(\TS_{\family,\seqshift^i\seq})$.  
\end{thm}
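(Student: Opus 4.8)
The plan is to transcribe the proof of Theorem~4.3 in \cite{AP}, the only extra bookkeeping being that one must keep track of which substitution $\sub_{\seq_i}$ is applied at each level of the inverse limit. Throughout I use that, by condition~1 of Remark~\ref{REM:mixed-conditions}, each $\sub_{\seq_i}$ is a continuous bijection from the compact space $\TS_{\family,\seqshift^i\seq}$ to $\TS_{\family,\seqshift^{i-1}\seq}$, hence a homeomorphism; write $\Psi_n = \sub_{\seq_n}^{-1}\sub_{\seq_{n-1}}^{-1}\cdots\sub_{\seq_1}^{-1}\colon \TS_{\family,\seq}\to\TS_{\family,\seqshift^n\seq}$ for the $n$-fold deflation, with $\Psi_0$ the identity. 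First I would define the candidate homeomorphism $\Phi\colon \TS_{\family,\seq}\to\varprojlim_{\shift_i}AP(\TS_{\family,\seqshift^i\seq})$ by $\Phi(\tiling)=(x_0,x_1,x_2,\dots)$, where $x_n$ is the class of $(\Psi_n\tiling,0)$ in $AP(\TS_{\family,\seqshift^n\seq})$. This lands in the inverse limit because, by the formula for $\shift_n$ in Proposition~\ref{PROP:complex-map}, $\shift_n(x_n)=[\sub_{\seq_n}(\Psi_n\tiling),\,\inflation_{\seq_n}\cdot 0]=[\Psi_{n-1}\tiling,0]=x_{n-1}$. Continuity of $\Phi$ reduces to continuity of each coordinate $\tiling\mapsto x_n$, which is the composite of the homeomorphism $\Psi_n$ with the map $\tiling'\mapsto[\tiling',0]$ from the hull to its Anderson--Putnam complex; the latter is locally constant wherever $0$ lies in the interior of a tile, and the Anderson--Putnam equivalence relation is set up precisely so that it remains continuous at the tilings in which $0$ lies on a tile boundary.

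Next, injectivity. If $\Phi(\tiling)=\Phi(\tiling')$ then $[\Psi_n\tiling,0]=[\Psi_n\tiling',0]$ for all $n$, i.e.\ after a translation the collared tile of $\Psi_n\tiling$ at the origin coincides with that of $\Psi_n\tiling'$. Applying $\sub_{\seq_1}\cdots\sub_{\seq_n}$ expands lengths by $\inflation_{\seq_1}\cdots\inflation_{\seq_n}\to\infty$ and turns this collared tile into a patch of $\tiling$ (resp.\ $\tiling'$) containing a ball about the origin of radius tending to infinity; hence $\tiling$ and $\tiling'$ agree on arbitrarily large balls around the origin, up to a translation which FLC and the presence of a tile at the origin let us keep bounded, so in the limit $\tiling=\tiling'$.

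Surjectivity is the part I expect to be the main obstacle. Given a thread $(x_0,x_1,x_2,\dots)$, pick for each $n$ a representative $(\tiling^{(n)},u_n)$ of $x_n$ with $u_n$ interior to a tile of $\tiling^{(n)}$, so that $x_n$ is recorded by a collared tile $C_n$ with a marked point; the relation $\shift_n(x_n)=x_{n-1}$ says $\sub_{\seq_n}(C_n)$ contains, around the point $\inflation_{\seq_n}u_n$, a translate of $C_{n-1}$. Iterating and translating so that all marked points sit at the origin, the patches $P_n=\sub_{\seq_1}\cdots\sub_{\seq_n}(C_n)$ form an increasing nested sequence, each $P_n$ lying inside a single tiling of $\TS_{\family,\seq}$ since $P_n$ is a sub-patch of $\sub_{\seq_1}\cdots\sub_{\seq_n}(p_i+u)$ for the prototile $p_i$ carrying the central tile of $C_n$. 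Primitivity (condition~2) forces the central tile to spawn balls of radius $\to\infty$ about the origin under these inflations, so $\tiling=\bigcup_n P_n$ tiles $\R^d$, and every bounded patch of $\tiling$ lies in some $P_n\subseteq\sub_{\seq_1}\cdots\sub_{\seq_n}(p_i+u)$, whence $\tiling\in\TS_{\family,\seq}$. That $\Phi(\tiling)=(x_n)$ follows once one checks that deflating $\tiling$ $n$ times by $\sub_{\seq_n}^{-1}\cdots\sub_{\seq_1}^{-1}$ exactly undoes the inflations used to build $P_n$ and returns the collared tile $C_n$ with its marked point --- this is where condition~1 (unique de-substitution, equivalently injectivity of the $\sub_{\seq_i}$) is essential.

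Finally, $\TS_{\family,\seq}$ is compact Hausdorff and $\varprojlim_{\shift_i}AP(\TS_{\family,\seqshift^i\seq})$ is compact Hausdorff, being an inverse limit of finite CW complexes; a continuous bijection between compact Hausdorff spaces is a homeomorphism, so $\Phi$ is one. In summary, the only genuinely new point relative to \cite{AP} is the indexing of substitutions along $\seq$, which condition~1 of Remark~\ref{REM:mixed-conditions} handles at every level; the realization argument in the surjectivity step is where care is needed, while continuity, injectivity, and the compactness conclusion are routine adaptations.
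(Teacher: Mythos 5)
The paper offers no proof of its own for this statement: it is imported from \cite{AP} with the remark that the arguments there carry over to the mixed setting with minimal modifications, and your write-up is exactly that adaptation (the map $\tiling\mapsto([\Psi_n\tiling,0])_n$, injectivity via inflating the common collared tile at the origin, surjectivity via nested inflated patches, and compactness to upgrade a continuous bijection to a homeomorphism). Two small slips in the surjectivity step are worth repairing, though neither is fatal. First, the growth of the balls around the origin comes not from primitivity but from the collar: the collared tile $C_n$ contains a ball of some uniform radius $M>0$ about its marked point (the constant $M$ from FLC, as in the proof of Theorem \ref{THM:merge-vertices}), so $P_n$ contains a ball of radius $\lambda_{\seq_1}\cdots\lambda_{\seq_n}M\to\infty$; the central tile alone need not contain any ball about the origin if the marked point sits on its boundary. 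Second, your claim that $P_n=\sub_{\seq_1}\cdots\sub_{\seq_n}(C_n)$ lies inside $\sub_{\seq_1}\cdots\sub_{\seq_n}(p_i+u)$ for the prototile of the \emph{central} tile of $C_n$ is false, since the collared tile is strictly larger than its central tile; the correct justification is that $C_n$ is a bounded patch of a tiling in $\TS_{\family,\seqshift^n\seq}$, hence by the definition of that hull is contained in some deeper supertile $\sub_{\seq_{n+1}}\cdots\sub_{\seq_{n+m}}(p_j+v)$, whence $P_n\subseteq\sub_{\seq_1}\cdots\sub_{\seq_{n+m}}(p_j+v)$ and $\tiling\in\TS_{\family,\seq}$.
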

\begin{thm}[Theorem 6.1 in \cite{AP}]\alabel{THM:direct-limit}
The Cech cohomology group $H^j (\TS_{\family,\seq})$ is isomorphic to the direct limit of the system of abelian groups
\[
\minCDarrowwidth20pt%
\begin{CD}
H^j(AP(\TS_{\family,\seq})) @>\shift_1^*>> % 
H^j(AP(\TS_{\family,\seqshift\seq})) @>\shift_2^*>> % 
H^j(AP(\TS_{\family,\seqshift^2\seq})) @>\shift_3^*>> % 
\cdots 
\end{CD}
\]
\end{thm}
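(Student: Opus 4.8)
The plan is to reduce the statement to the continuity property of Cech cohomology under inverse limits. By the preceding theorem, $\TS_{\family,\seq}$ is homeomorphic to $\varprojlim_{\shift_i} AP(\TS_{\family,\seqshift^i\seq})$, so it suffices to compute the Cech cohomology of this inverse limit. The relevant general fact is that Cech cohomology is continuous: if $(X_i,f_i)_{i\in\N}$ is an inverse system of compact Hausdorff spaces with continuous bonding maps $f_i\colon X_i\to X_{i-1}$, then $H^j(\varprojlim X_i)\cong\varinjlim(H^j(X_i),f_i^{\,*})$, the direct limit being taken over the induced homomorphisms $f_i^{\,*}\colon H^j(X_{i-1})\to H^j(X_i)$. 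Applying this with $X_i=AP(\TS_{\family,\seqshift^i\seq})$ and $f_i=\shift_i$ produces exactly the asserted direct system, provided its hypotheses are met.

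So the first step is to verify those hypotheses. Each shifted sequence $\seqshift^i\seq$ again makes $(\family,\seqshift^i\seq)$ into a mixed substitution system satisfying the four conditions of Remark \ref{REM:mixed-conditions}: finite local complexity and compatibility are inherited at once, the one-to-one condition for $(\family,\seqshift^i\seq)$ is a special case of the one for $(\family,\seq)$, and $(\family,\seqshift^i\seq)$ is primitive with the same constant $n$ as $(\family,\seq)$. By FLC there are, up to translation, only finitely many collared tiles, so each $AP(\TS_{\family,\seqshift^i\seq})$ is a finite $CW$-complex, hence compact and metrizable; the maps $\shift_i$ are continuous by Proposition \ref{PROP:complex-map}. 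This establishes the hypotheses. The same finiteness shows that the groups $H^j(AP(\TS_{\family,\seqshift^i\seq}))$ appearing in the statement agree with the cellular cohomology of these finite complexes, which is what makes the right-hand side effectively computable.

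With the hypotheses in hand the theorem follows formally: $H^j(\TS_{\family,\seq})\cong H^j(\varprojlim_{\shift_i}AP(\TS_{\family,\seqshift^i\seq}))\cong\varinjlim(H^j(AP(\TS_{\family,\seqshift^i\seq})),\shift_i^{\,*})$, which is the direct limit displayed in the statement. One should also check that the homeomorphism of the preceding theorem intertwines the canonical projections of the inverse limit with the natural maps $\TS_{\family,\seq}\to AP(\TS_{\family,\seqshift^i\seq})$, so that the direct system obtained is really the one written; this compatibility is part of the construction behind the preceding theorem, carried over from \cite{AP}.

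The main obstacle is the one that also has to be dealt with in \cite{AP}: showing that each $AP(\TS_{\family,\seqshift^i\seq})$ is a genuine finite $CW$-complex. This is precisely where FLC and the compatibility (edge-to-edge) condition are used — they guarantee that the equivalence relation of Definition \ref{DEF:APcomplex} identifies only finitely many closed cells, and does so only along subcomplexes, so that the quotient is a finite $CW$-complex rather than a pathological space. Once this is settled, the passage from an inverse limit of spaces to a direct limit of cohomology groups is a formal application of the continuity axiom, and the only real difference from \cite{AP} is the bookkeeping needed to carry along the dependence of the complexes and bonding maps on the index $i$.
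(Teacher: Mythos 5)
Your proposal is correct and is essentially the argument the paper relies on: the paper defers to Theorem 6.1 of \cite{AP}, whose proof is exactly the combination of the inverse-limit presentation $\TS_{\family,\seq} \cong \varprojlim_{\shift_i} AP(\TS_{\family,\seqshift^i\seq})$ with the continuity of Cech cohomology for inverse limits of compact Hausdorff spaces, carried over to the mixed setting with only the index bookkeeping you describe. Your additional checks (that each $AP(\TS_{\family,\seqshift^i\seq})$ is a finite $CW$-complex via FLC and compatibility, and that the projections are intertwined) are precisely the ``minimal modifications'' the paper alludes to.
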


This gives us a practical method to compute the cohomology of $\TS_{\family,\seq}$ in terms of $AP(\TS_{\family,\seqshift^i\seq})$ and the maps $\shift_i$.  

The focus of this work is on one-dimensional substitution tilings, so let us discuss the Anderson-Putnam complex for this class of tilings in more detail.  
In fact, in this case, only combinatorial information from $AP(\TS_{\family,\seq})$ and $\shift_n$ is used in this computation.  
Therefore, even though the lengths of the collared tiles $\tiling(\tile)$ must be known in order to define the maps $\shift_n$, the cohomology groups do not depend on this information.  
So, when dealing with one-dimensional systems, we will often work on a purely symbolic level and suppress any mention of the tiles in our discussion.  
Indeed, when dealing with one-dimensional substitution tiling spaces, let us suppress any mention of the tiling space $\TS_{\family,\seq}$, and instead, by an abuse of notation, let us speak of $AP(\family,\seq)$ and $H^i(\family,\seq)$ instead of $AP(\TS_{\family,\seq})$ and $H^i(\TS_{\family,\seq})$ (or, if $\family = \{\sub\}$, let us simply write $AP(\sub)$ and $H^i(\sub)$).  
Note that the interpretation of a mixed symbolic substitution system as a tiling space might be limited to the case with appropriately chosen tile lengths (see Remark \ref{REM:caveat}).  

In light of this discussion, there is a description of the Anderson-Putnam complex of a mixed symbolic substitution $(\family,\seq)$ that is much easier to visualize.  
This complex has the structure of a directed graph that contains one edge for each three-letter word $\lrx_1\lrx_2\lrx_3$ that appears as a subword of some iterated substitution of some letter.  
The head of the edge $\lrx_1\lrx_2\lrx_3$ is equal to the tail of the edge $\lry_1\lry_2\lry_3$ if $\lrx_2\lrx_3 = \lry_1\lry_2$ and the word $\lrx_1\lrx_2\lrx_3\lry_3$ appears as a subword of some iterated substitution of some letter.  
This vertex can be conveniently labelled with the overlap word $\lrx_2\lrx_3$.  
If there are multiple vertices corresponding to the same overlap word, we can distinguish them by using subscripts, as in Example \ref{EX:bbaaab-bbab}.

We can derive from this Anderson-Putnam complex three pieces of relevant combinatorial information that are used in the computation of cohomology: a coboundary matrix $\delta_{1,n}(AP(\family,\seq))$, and two matrices $A_{0,n}(AP(\family,\seq))$ and $A_{1,n}(AP(\family,\seq))$ that describe where the $0$-cells and $1$-cells of the complex $AP(\family,\seqshift^n\seq)$ are mapped under $\shift_n$.  
In particular, $\delta_{1,n}(AP(\family,\seq))$ has one row for each $1$-cell and one column for each $0$-cell of the complex $AP(\family,\seqshift^{n-1}\seq)$.  
Its entry at position $(i, j)$ is $1$ if vertex $j$ is the head of edge $i$, $-1$ if vertex $j$ is the tail of edge $i$, and $0$ otherwise.  
$A_{0,n}(AP(\family,\seq))$ is a square matrix, the rows and columns of which correspond to the $0$-cells of the complexes $AP(\family,\seqshift^n\seq)$ and $AP(\family,\seqshift^{n-1}\seq)$ respectively.  
Its entry at position $(i,j)$ is $1$ if $\shift_n$ sends vertex $i$ of $AP(\family,\seqshift^{n}\seq)$ to vertex $j$ of $AP(\family,\seqshift^{n-1}\seq)$ and $0$ otherwise.  
$A_{1,n}(AP(\family,\seq))$ is a square matrix, the rows and columns of which correspond to the $1$-cells of the complexes $AP(\family,\seqshift^n\seq)$ and $AP(\family,\seqshift^{n-1}\seq)$ respectively.  
Its entry at position $(i,j)$ is the number of times that the edge $j$ of $AP(\family,\seqshift^{n-1}\seq)$ appears in the image under $\shift_n$ of edge $i$ of $AP(\family,\seqshift^{n}\seq)$.  
More specifically, if the collared tiles $\lrx_1\lrx_2\lrx_3$ and $\lry_1\lry_2\lry_3$ are indexed by $i$ and $j$ respectively, then the $(i,j)$-th entry of $A_{1,n}(AP(\family,\seq))$ is the number of occurrences of the word $\lry_1\lry_2\lry_3$ as a subword of $\sub_{\seq_n}(\lrx_1\lrx_2\lrx_3)$ for which the middle letter $\lry_2$ occurs in the image $\sub_{\seq_n}(\lrx_2)$ of the middle letter $\lrx_2$.  

When the complex $AP(\family,\seq)$ is understood, let us write only $\delta_{1,n}, A_{0,n}$, and $A_{1,n}$ instead of $\delta_{1,n}(AP(\family,\seq))$, $A_{0,n}(AP(\family,\seq))$, and $A_{1,n}(AP(\family,\seq))$.  
Note that the matrix $\delta_{1,n}$ depends only on the structure of $AP(\family,$ $\seqshift^{n-1}\seq)$, but $A_{0,n}$ and $A_{1,n}$ depend also on the map $\shift_n$.  

Then the cohomology groups $H^0(\family,\seq)$ and $H^1(\family,\seq)$ are computed as follows.  
$H^0(\family,\seq)$ is the direct limit of 
\[
\begin{CD}
\ker (\delta_{1,n}(\family,\seq)) @>A_{0,1}>> % 
\ker (\delta_{1,n}(\family,\seq)) @>A_{0,2}>> % 
\ker (\delta_{1,n}(\family,\seq)) @>A_{0,3}>> % 
\cdots 
\end{CD}
\]
and $H^1(\family,\seq)$ is the direct limit of
\[
\begin{CD}
\Z^{k_1} / (\delta_{1,n}(\family,\seq))(\Z^{l_2}) @>\tilde{A}_{1,1}>> % 
\Z^{k_2} / (\delta_{1,n}(\family,\seq))(\Z^{l_3}) @>\tilde{A}_{1,2}>> % 
%\Z^{k_3} / (\delta_{1}(\family,\seqshift^2\seq))(\Z^{l_3}) @>\tilde{A}_{1,3}>> % 
\cdots 
\end{CD}
\]
where $k_n$ is the number of edges and $l_n$ the number of vertices in $AP(\family,\seqshift^{n-1}\seq)$, and $\tilde{A}_{1,n}$ is the matrix induced by $A_{1,n}$ on the quotient group $\Z^{k_{n}}/$ $(\delta_{1,n}(\family,\seq))(\Z^{l_{n+1}})$.  

\begin{remark}
If $\family = \{\sub\}$, then the matrices $\delta_{1,n}$, $A_{0,n}$, and $A_{1,n}$ do not depend on $n$, so we can simplify our notation and write $\delta_1$, $A_0$, and $A_1$.  
\end{remark}
\begin{example}\alabel{EX:bbaaab-bbab}
Figure \ref{FIG:complex-bbaaab-bbab} shows the Anderson-Putnam complex of the substition
\[
\begin{array}{rcl}
a & \to & bbaaab\\
b & \to & bbab
\end{array}
\]

\begin{figure}[h]
\begin{center}
\includegraphics[height=\complexheight]{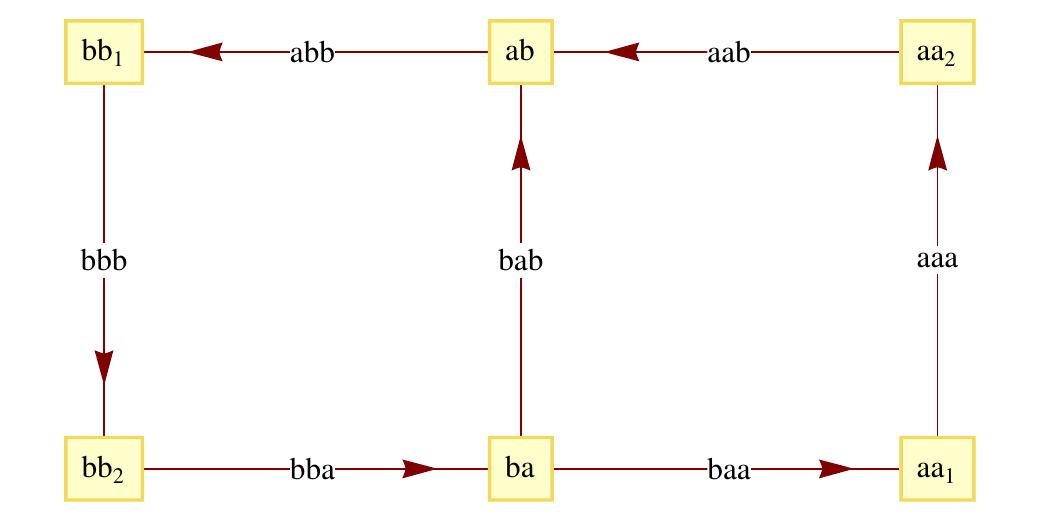}
\end{center}
\caption{$AP(  a \to bbaaab, b \to bbab )$\alabel{FIG:complex-bbaaab-bbab}}
\end{figure}

We can see by inspection that the word $aba$ will never occur in any iterated substitution of any letter, while two iterations of the substitution on the starting letter $a$ are sufficient to show that any other three-letter word is possible.
\[
\begin{array}{l}
a \\
bbaaab \\
bbabbbabbbaaabbbaaabbbaaabbbab
\end{array}
\]

The matrices are
\[
\delta_1 = \left[ 
\begin{array}{rrrrrr}
-1 &  1 &  0 &  0 &  0 &  0 \\
 0 & -1 &  1 &  0 &  0 &  0 \\
 0 &  0 & -1 &  0 &  1 &  0 \\
 1 &  0 &  0 & -1 &  0 &  0 \\
 0 &  0 &  1 & -1 &  0 &  0 \\
 0 &  0 &  0 &  1 &  0 & -1 \\
 0 &  0 &  0 &  0 & -1 &  1 
\end{array} \right],
\]
\[
A_0 = \left[
\begin{array}{cccccc}
0 & 0 & 0 & 0 & 1 & 0 \\
0 & 0 & 0 & 0 & 1 & 0 \\
0 & 0 & 0 & 0 & 1 & 0 \\
0 & 0 & 0 & 0 & 1 & 0 \\
0 & 0 & 0 & 0 & 1 & 0 \\
0 & 0 & 0 & 0 & 1 & 0 
\end{array}
\right],
\quad A_1 = \left[
\begin{array}{ccccccc}
1 & 1 & 1 & 1 & 0 & 1 & 1 \\
1 & 1 & 1 & 1 & 0 & 1 & 1 \\
0 & 0 & 1 & 0 & 1 & 1 & 1 \\
1 & 1 & 1 & 1 & 0 & 1 & 1 \\
1 & 1 & 1 & 1 & 0 & 1 & 1 \\
0 & 0 & 1 & 0 & 1 & 1 & 1 \\
0 & 0 & 1 & 0 & 1 & 1 & 1 
\end{array}
\right].
\]

The cohomology groups are
\[
H^0\left(\begin{array}{rcl}a & \to & bbaaab \\  b & \to & bbab \end{array} \right) %
\cong \Z, \ %
H^1\left(\begin{array}{rcl}a & \to & bbaaab \\  b & \to & bbab \end{array} \right) %
\cong \Z \left[ \frac{1}{6} \right] \oplus \ \Z \left[ \frac{1}{6}\right].%
\]

\end{example}

\section{Changing the Underlying Cell Complex}\alabel{SEC:fullcomplex}

The Anderson-Putnam complex defined above depends on the particular substitution, and varies along a sequence of substitutions.  
In order to deal with mixed substitution systems, it will be useful to modify the Anderson-Putnam complex in such a way that it will work for many substitutions at the same time.  
This motivates the following definition.  

\begin{defn}
The \defemph{full Anderson-Putnam complex} on an alphabet $\A$, denoted by $AP(\A)$, is the directed graph defined as follows.

\begin{itemize}
\item  The vertices of $AP(\A)$ consist of all words $\lrx_1\lrx_2 \in \A^2$.
\item  The edges of $AP(\A)$ consist of all words $\lrx_1\lrx_2\lrx_3 \in \A^3$.
\item  The head of the edge $\lrx_1\lrx_2\lrx_3$ is the vertex $\lrx_2\lrx_3$ and its tail is the vertex $\lrx_1\lrx_2$.  
\end{itemize}
\end{defn}

The full complex on $\{ a, b\}$ is depicted in Figure \ref{FIG:fullcomplex-ab}.

\begin{figure}[h]
\begin{center}
\includegraphics[height=\fullcomplexheight]{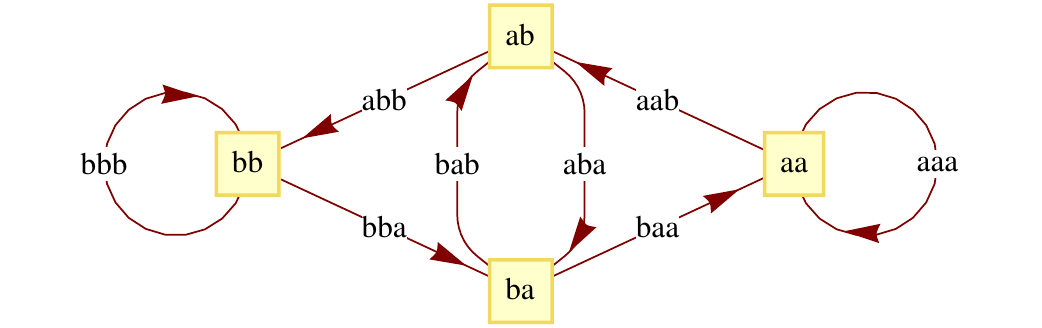}
\end{center}
\caption{$AP(\{ a, b\} )$\alabel{FIG:fullcomplex-ab}}
\end{figure}

Not every substitution on the alphabet $\A$ has the full complex of $\A$ as its Anderson-Putnam complex.  
Indeed, clearly $AP(a\to bbaaab, b\to bbab)$ in Example \ref{EX:bbaaab-bbab}, Figure \ref{FIG:complex-bbaaab-bbab} is different from $AP(\{ a,b\})$ in Figure \ref{FIG:fullcomplex-ab}: it has more edges, but fewer vertices.  
Nevertheless, the substitution $a\to bbaaab, b\to bbab$ induces a continuous map on $AP(\{ a,b\})$ in the usual way, and therefore it also induces a map on $H^i(AP(\{ a,b\}))$, and it is natural to ask if the inductive limits of these cohomology groups give the same answer as if we computed $\varinjlim H^i(AP(a\to bbaaab,b\to bbab))$.  
For certain substitutions, the answer to this question will be no, but for the substitution in Example \ref{EX:bbaaab-bbab}, the answer is yes; moreover, it is even true that the full Anderson-Putnam complex gives the same result at the level of topological spaces; that is,  
\[
\varprojlim_{\shift^*}AP(\sub) \cong \varprojlim_{\shift^*}AP(\{ a,b\}).
\]

The full Anderson-Putnam complex $AP(\{ a,b\} )$ differs from $AP(a\to bbaaab, b\to bbab)$ in two ways: it contains the extra edge $aba$, and it contains the vertices $aa$ and $bb$, which in $AP(a\to bbaaab, b\to bbab)$ have been split into $aa_1$, $aa_2$ and $bb_1$, $bb_2$ respectively.  
In the rest of this section, let us discuss the conditions under which one may modify the Anderson-Putnam complex while leaving topological invariants---either the cohomology groups or the topological space itself---unchanged.

\subsection{Merging Cells}\alabel{SEC:merge-cells}

Let us first show that the operation of merging vertices does not change the projective limit of the complexes.  

In fact, this is true more generally for any mixed substitution spaces that satisfy conditions 1--4 from Remark \ref{REM:mixed-conditions}.  
This proof involves defining a new, modified version of the Anderson-Putnam complex.  
\begin{defn}\alabel{DEF:modified-APcomplex}
Let $\TS$ be a tiling space, the tilings of which have a $CW$-structure.  

The \emph{modified Anderson-Putnam complex} of $\TS$ is denoted by $AP'(\TS)$, and consists of the same topological space $\TS\times \R^d$ from Definition \ref{DEF:APcomplex} under a different equivalence relation.  
The equivalence relation is the smallest relation $\approx$ that equates $(\tiling_1,u_1)$ and $(\tiling_2,u_2)$ if $u_1$ lies in a subcell $\cellc_1$ of a tile $\tile_1 \in \tiling_1$, $u_2$ lies in a subcell $\cellc_2$ of a tile $\tile_2\in \tiling_2$, and $\tiling_1(\cellc_1) - u_1 = \tiling_2(\cellc_2)-u_2$.  
\end{defn}

Let us denote the modified Anderson-Putnam complex of a one-dimensional mixed substitution space $\TS_{\family,\seq}$ by $AP'(\family,\seq)$, or by $AP'(\sub)$ if $\family = \{ \sub\}$.  

\begin{prop}\alabel{PROP:modified-shift}
The substitution $\sub_{s_i}$ induces a continuous map $\shift_i'$ from $AP'(\TS_{\family,\seqshift^i \seq})$ onto $AP'(\TS_{\family,\seqshift^{i-1}\seq})$ defined by the formula $\shift_i'(\tiling,u) = $ $(\sub_{s_i}(\tiling),\inflation_{s_i} u)$.  
\end{prop}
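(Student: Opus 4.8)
The plan is to mimic the proof of Proposition \ref{PROP:complex-map} (Proposition 4.2 in \cite{AP}), adapting it to the modified equivalence relation $\approx$ in place of $\sim$. The map $\shift_i'$ is defined on the underlying space $\TS_{\family,\seqshift^i\seq}\times\R^d$ by the same formula as $\shift_i$, namely $(\tiling,u)\mapsto(\sub_{s_i}(\tiling),\inflation_{s_i}u)$; this is manifestly continuous on the product space (the $\TS$ factor is discrete and $\sub_{s_i}$ acts on it set-theoretically, while $u\mapsto\inflation_{s_i}u$ is linear), and it maps $\TS_{\family,\seqshift^i\seq}\times\R^d$ onto $\TS_{\family,\seqshift^{i-1}\seq}\times\R^d$ because $\sub_{s_i}$ sends $\TS_{\family,\seqshift^i\seq}$ onto $\TS_{\family,\seqshift^{i-1}\seq}$ (this surjectivity is part of conditions 1--2 in Remark \ref{REM:mixed-conditions}, via primitivity and the standard recognizability argument). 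So the only real content is that this formula descends to the quotients, i.e.\ that $(\tiling_1,u_1)\approx(\tiling_2,u_2)$ in $AP'(\TS_{\family,\seqshift^i\seq})$ implies $(\sub_{s_i}(\tiling_1),\inflation_{s_i}u_1)\approx(\sub_{s_i}(\tiling_2),\inflation_{s_i}u_2)$ in $AP'(\TS_{\family,\seqshift^{i-1}\seq})$. Since $\approx$ is generated by the elementary relation, it suffices to check this for a single elementary equivalence.

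So suppose $u_1$ lies in a subcell $\cellc_1$ of a tile $\tile_1\in\tiling_1$, $u_2$ lies in a subcell $\cellc_2$ of a tile $\tile_2\in\tiling_2$, and $\tiling_1(\cellc_1)-u_1=\tiling_2(\cellc_2)-u_2$. The key geometric observation is that the substitution $\sub_{s_i}$, being a CW-map on each prototile (so that it carries subcells to unions of subcells, by compatibility, condition 4), sends the pattern of tiles meeting $\cellc_1$ to the pattern of tiles meeting $\sub_{s_i}(\cellc_1)$; more precisely, because $\sub_{s_i}$ is defined locally tile-by-tile, the patch $\sub_{s_i}(\tiling_1(\cellc_1))$ equals $\tiling_1'(\,\sub_{s_i}(\cellc_1)\,)$ where $\tiling_1'=\sub_{s_i}(\tiling_1)$ — that is, the image under $\sub_{s_i}$ of everything touching $\cellc_1$ is exactly everything in the new tiling touching the image cell complex $\sub_{s_i}(\cellc_1)$, after rescaling. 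Now pick a point $u_1'$ in a subcell $\cellc_1'$ of the inflated pattern lying over $\cellc_1$: one wants $\inflation_{s_i}u_1$ itself, but that point might land on a cell boundary, so instead argue that $\tiling_1'(\cellc_1')-u_1'$ depends only on $\tiling_1(\cellc_1)-u_1$ together with the position of $u_1'$ relative to $\inflation_{s_i}u_1$ inside $\sub_{s_i}(\cellc_1)$. Since that relative data is identical for $\tiling_2$ by hypothesis, we get $\tiling_1'(\cellc_1')-u_1'=\tiling_2'(\cellc_2')-u_2'$ for correspondingly chosen points, which gives the desired elementary $\approx$-equivalence between $(\tiling_1',\inflation_{s_i}u_1)$ and $(\tiling_2',\inflation_{s_i}u_2)$ — possibly as a finite chain of elementary equivalences connecting $\inflation_{s_i}u_1$ to a generic point of one of the image subcells and back.

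I expect the main obstacle to be the bookkeeping at cell boundaries: $\inflation_{s_i}u$ may fail to lie in the interior of a single subcell of the inflated complex, so one cannot simply say ``$\inflation_{s_i}u_1$ lies in the subcell $\sub_{s_i}(\cellc_1)$.'' The clean way around this is to note that $\approx$ already identifies the data seen from a boundary point with the data seen from nearby interior points (this is exactly the statement that the quotient $AP'$ is a genuine CW-complex in which boundary cells are glued in), so one may perturb $\inflation_{s_i}u_1$ slightly into the interior of a top-dimensional cell, run the argument there, and then use continuity of $\shift_i'$ on the underlying space together with the fact that $\approx$-equivalence classes of boundary points are limits of those of interior points. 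Alternatively, and perhaps more cleanly, one reduces to the case where $u_1$ itself is in the interior of a top cell of $\tiling_1$ — then $\cellc_1=\tile_1$, $\tiling_1(\cellc_1)=\{\tile_1\}$, and $\sub_{s_i}(\{\tile_1\})$ is a fixed patch determined by the type of $\tile_1$, so the matching is immediate — and handles general $u_1$ by taking limits, invoking that the top cells are dense and $\shift_i'$ is continuous. The surjectivity claim, finally, is not an obstacle but should be stated explicitly: it follows because every tiling in $\TS_{\family,\seqshift^{i-1}\seq}$ is $\sub_{s_i}$ of some tiling in $\TS_{\family,\seqshift^i\seq}$, which is the content of condition 1 (invertibility of $\sub_{s_i}$ as a map between these hulls) combined with the fact that $\sub_{s_i}$ maps the one hull into the other.
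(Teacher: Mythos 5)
Your proposal is correct and takes essentially the same approach as the paper, which disposes of this proposition in one line by declaring the proof identical to that of Proposition \ref{PROP:complex-map} (Proposition 4.2 of \cite{AP}): the content is exactly what you identify, namely that the formula descends through the generating elementary relation of $\approx$ because the substitution acts locally, tile by tile. The only small correction is that your intermediate identity $\sub_{s_i}(\tiling_1(\cellc_1))=\tiling_1'(\sub_{s_i}(\cellc_1))$ should be the containment $\tiling_1'(\inflation_{s_i}\cellc_1)\subseteq\sub_{s_i}(\tiling_1(\cellc_1))$ rather than an equality, but only that containment is needed for the descent argument.
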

\begin{proof}
The proof is the same as that of Proposition \ref{PROP:complex-map} (Proposition 4.2 in \cite{AP}).
\end{proof}

\begin{example}\alabel{EX:modified-APcomplex}
The modified Anderson-Putnam complex of the substitution from Example \ref{EX:bbaaab-bbab} appears in Figure \ref{FIG:modified-bbaaab-bbab}.  
\begin{figure}[h]
\begin{center}
\includegraphics[height=\fullcomplexheight]{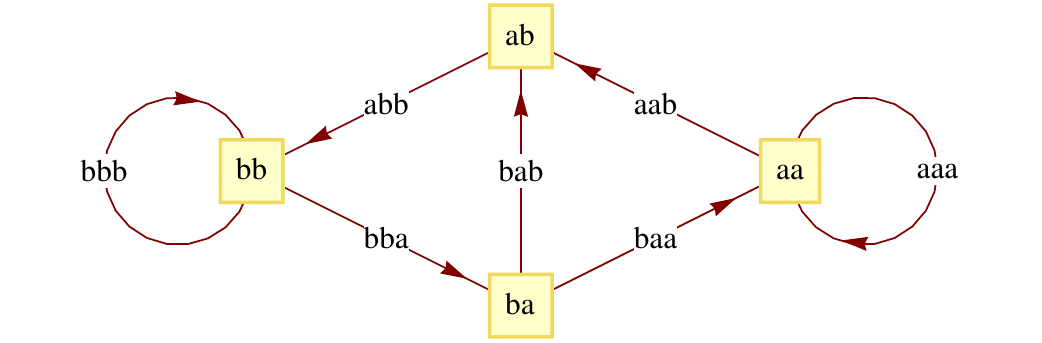}
\end{center}
\caption{$AP'( a\to bbaaab, b\to bbab )$\alabel{FIG:modified-bbaaab-bbab}}
\end{figure}
\end{example}

\begin{thm}\alabel{THM:merge-vertices}
Let $\TS_{\family,\seq}$ be a mixed substitution space of tilings of $\R^d$ satisfying the conditions 1--4 from Section \ref{REM:mixed-conditions}.  Then 
\[
\varprojlim_{\shift_i'}AP'(\TS_{\family,\seqshift^i\seq}) %
\cong %
\varprojlim_{\shift_i}AP(\TS_{\family,\seqshift^i\seq}) %
\]
\end{thm}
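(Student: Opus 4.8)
The plan is to deduce the statement from Theorem 4.3 of \cite{AP} (stated above), which identifies $\TS_{\family,\seq}$ with $\varprojlim_{\shift_i}AP(\TS_{\family,\seqshift^i\seq})$, by producing a homeomorphism between the two inverse limits. The first step is to observe that $AP'(\TS)$ is a quotient of $AP(\TS)$: if $\tiling_1(\tile_1)-u_1=\tiling_2(\tile_2)-u_2$ and $\cellc_j$ denotes the minimal subcell of $\tile_j$ containing $u_j$, then every tile of $\tiling_j$ meeting $\cellc_j$ meets $\tile_j$, hence lies in the collared tile $\tiling_j(\tile_j)$, so $\tiling_1(\cellc_1)-u_1=\tiling_2(\cellc_2)-u_2$; thus the relation $\sim$ refines $\approx$, and the identity of $\TS\times\R^d$ descends to a continuous surjection $\pi_\TS\colon AP(\TS)\to AP'(\TS)$. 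Since $\shift_i$ and $\shift_i'$ are both induced by $(\tiling,u)\mapsto(\sub_{\seq_i}\tiling,\inflation_{\seq_i}u)$, these quotient maps intertwine them, and passing to inverse limits yields a continuous map
\[
\Pi\colon\varprojlim_{\shift_i}AP(\TS_{\family,\seqshift^i\seq})\longrightarrow\varprojlim_{\shift_i'}AP'(\TS_{\family,\seqshift^i\seq}).
\]
As all the complexes are compact Hausdorff, it suffices to show that $\Pi$ is a bijection.

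Surjectivity of $\Pi$ is automatic: for a point $(y_i)$ of the target, the fibers $\pi_i^{-1}(y_i)$ are nonempty closed subsets of $AP(\TS_{\family,\seqshift^i\seq})$ that the bonding maps $\shift_i$ carry into one another, so the inverse limit of this subsystem of nonempty compact spaces is nonempty and is sent by $\Pi$ onto $(y_i)$. For injectivity I would combine $\Pi$ with the homeomorphism $\TS_{\family,\seq}\cong\varprojlim AP$ of Theorem 4.3: under this identification a tiling $\tiling$ corresponds to the sequence $\bigl([\,\sub_{\seq_i}^{-1}\cdots\sub_{\seq_1}^{-1}(\tiling),\,0\,]_\sim\bigr)_{i\ge 0}$ (using that the de-substitution maps are invertible on the relevant hulls), so injectivity of $\Pi$ reduces to the assertion that distinct tilings $\tiling\ne\tiling'$ give distinct sequences $\bigl([\,\sub_{\seq_i}^{-1}\cdots\sub_{\seq_1}^{-1}(\tiling),\,0\,]_{\approx}\bigr)_{i\ge0}$ in $\varprojlim AP'$.

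That last assertion is the one point that must be checked rather than copied from \cite{AP}, and it is where the coarsening of the complex could in principle cause trouble. The key is that, for $AP'$ exactly as for $AP$, the class $[\tiling,0]_{\approx}$ still determines $\tiling$ on a ball $B_{\rho}(0)$ of a radius $\rho>0$ fixed in advance: each elementary step of the relation $\approx$ identifies $(\hat\tiling,w)$ with a pair $(\hat\tiling',w')$ for which $\hat\tiling'-w'$ agrees with $\hat\tiling-w$ on the union of the tiles of $\hat\tiling$ meeting the subcell that contains $w$, and that union contains the union of the tiles containing $w$, which (uniformly in the configuration, by finite local complexity) contains a ball of some fixed radius $\rho>0$ about $w$; chaining such steps gives agreement on $B_\rho(0)$. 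Granting this, if $\tiling$ and $\tiling'$ have the same image in $\varprojlim AP'$ then $\sub_{\seq_i}^{-1}\cdots\sub_{\seq_1}^{-1}(\tiling)$ and $\sub_{\seq_i}^{-1}\cdots\sub_{\seq_1}^{-1}(\tiling')$ agree on $B_{\rho}(0)$ for every $i$; in particular they contain the same patch $Q_i$ of tiles around the origin, and applying $\sub_{\seq_1}\cdots\sub_{\seq_i}$ produces a patch common to $\tiling$ and $\tiling'$ whose support contains a ball about the origin of radius at least $(\inflation_{\seq_1}\cdots\inflation_{\seq_i})\,\rho$, which tends to infinity since each $\inflation_{\seq_j}>1$. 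Hence $\tiling=\tiling'$, so $\Pi$ is injective and the theorem follows. This is precisely the recognizability argument that proves Theorem 4.3 in \cite{AP}; the only real obstacle is confirming that the smaller collars of $AP'$ still supply the fixed-size patch of local data that argument consumes. (Equivalently, one could run that argument in its original form to establish $\varprojlim_{\shift_i'}AP'(\TS_{\family,\seqshift^i\seq})\cong\TS_{\family,\seq}$ directly — with surjectivity handled by a nested exhausting sequence of patches $\sub_{\seq_1}\cdots\sub_{\seq_i}$ of collared cells, using primitivity — and then quote Theorem 4.3; the two approaches are interchangeable.)
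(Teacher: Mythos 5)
Your proposal is correct and follows essentially the same route as the paper's proof: both pass the quotient maps $AP(\TS_{\family,\seqshift^i\seq})\to AP'(\TS_{\family,\seqshift^i\seq})$ to the inverse limits (surjectivity there being the easy compactness step) and establish injectivity by the same recognizability argument, namely that FLC supplies a uniform radius $\rho>0$ (the paper's $M=\min_l M_l$) such that an elementary $\approx$-identification already forces the two tilings to agree on a ball of radius $\rho$ about the base point, after which applying enough substitutions inflates this agreement past the diameter of a collared tile, respectively to all of $\R^d$. One small repair to your justification of that key step: the intermediate claim that the union of the tiles \emph{containing} $w$ contains a ball of uniform radius about $w$ is false (take $w$ interior to a tile but arbitrarily close to its boundary); what is true, and what your argument actually uses, is that the union of the tiles \emph{meeting the closed subcell} containing $w$ contains a uniform neighborhood of that entire subcell, and hence of $w$.
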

\begin{proof}
Let $X = \varprojlim_{\shift_i}AP(\TS_{\family,\seq})$ and $Y = \varprojlim_{\shift_i'}AP'(\TS_{\family,\seq})$, and let us view the elements of $X$ and $Y$ as sequences.  

The equivalence relation $\approx$ is coarser than the relation $\sim$, so there is a continuous quotient map $f_i : AP(\TS_{\family,\seqshift^{i-1}\seq})\to AP'(\TS_{\family,\seqshift^{i-1}\seq})$.  
Moreover, it is easy to see that $f_i \circ \shift_{i} = \shift_i' \circ f_{i+1}$.  

Let $F: X\to Y$ denote the continuous surjection induced by the family $\{ f_i\}$ at the level of projective limits.  
Let us prove the theorem by showing that $F$ is one-to-one.  

If we suppose that this is not the case, then there are two different sequences $(x_i^1)$ and $(x_i^2)$ in $X$ that are both mapped to the same sequence $(y_i) \in Y$ under $F$.  
Since $(x_i^1)$ and $(x_i^2)$ are different, there must be some index $N$ such that $x_i^1\neq x_i^2$ for all $i\geq N$.  
Let $(\tiling_i^1,u_i^1)$ and $(\tiling_i^2,u_i^2)$ be $\sim$-equivalence class representatives for $x_i^1$ and $x_i^2$ respectively.  
Then $(\tiling_i^1,u_i^1)\approx (\tiling_i^2,u_i^2)$, so the tiles touching some cell containing $u_i^1$ are translation equivalent to the tiles touching some cell containing $u_i^2$.  
In particular, this means that the tiles touching $u_i^1$ are translation equivalent to the tiles touching $u_i^2$; in other words, $\tiling_i^1(u_i^1) = \tiling_i^2(u_i^2)$.  
But for $i$ sufficiently large, $(\tiling_i^1,u_i^1)\nsim (\tiling_i^2,u_i^2)$, which means that $\tiling_i^1(\tiling_i^1(u_i^1))$ is not translationally equivalent to $\tiling_i^2(\tiling_i^2(u_i^2))$.  

The relations $\sim$ and $\approx$ are the same if $u_i^1$ and $u_i^2$ are both contained only in $d$-cells, but not in any cells of lower dimension.  
Therefore, for $i\geq N$, $u_i^1$ and $u_i^2$ must both be contained in lower-dimensional cells of their respective tilings.  

By FLC, for each dimension $l\leq d$, there exists a minimum distance $M_l$ defined by 
\begin{equation*}
\begin{split}
M_l & := \min %
\{ \epsilon > 0 : %
\tiling\in \TS_{\family,\seqshift^m\seq} \text{ for some } m \geq 1, \\
&  \qquad \cellc \text{ is an } l\text{-cell of }\tiling,  v\in \cellc, \text{ and } % 
\overline{B_\epsilon (v)}\subseteq \tiling(\cellc)\}.
\end{split}
\end{equation*}

Let $M = \min_{l\leq d} M_l$, let $\lambda_0$ denote the minimum of the inflation factors of the substitutions $\sub_1, \ldots , \sub_k$, and define $L$ by 
\begin{equation*}
\begin{split}
L & := \max_{m\geq 1, \tiling\in\TS_{\family,\seqshift^m\seq}} \{ \diam (\tiling(t)) : t\in \tiling\},
\end{split}
\end{equation*}
which exists by FLC.  
Choose $N' \in \N$ such that $L < \lambda_0^{N'}M$.  
Then the partial tilings $\tiling_{N+N'}^1(u_{N+N'}^1)$ and $\tiling_{N+N'}^2(u_{N+N'}^2)$ agree up to translation and contain the open balls $B_M(u_{N+N'}^1)$ and $B_M(u_{N+N'}^2)$ respectively, so $\sub_{N}\cdots \sub_{N+N'-1} (\tiling_{N+N'}^1(u_{N+N'}^1))$ and $\sub_{N}\cdots \sub_{N+N'-1}$ $(\tiling_{N+N'}^2(u_{N+N'}^2))$ agree up to translation and contain the open balls $B_{\lambda_0^{N'}M}(u_{N+N'}^1)$ and $B_{\lambda_0^{N'}M}(u_{N+N'}^2)$.  
By our choice of $N'$, these open balls contain $\tiling_N^1(\tilet^1)$ and $\tiling_N^2(\tilet^2)$ respectively, for any tiles $\tilet^1$ and $\tilet^2$ with $u_N^1\in \tilet^1$ and $u_N^2 \in \tilet^2$.  
This means that $(\tiling_N^1,u_N^1)\sim (\tiling_N^2,u_N^2)$, which is a contradication.  
Therefore $F$ is one-to-one and $X\cong Y$.  
\end{proof}

\subsection{Adding Cells}\alabel{SEC:add-cells}

The previous result is true for tilings in arbitrary dimension, but the results that follow will only be proved for tilings in one dimension.  
Therefore let us now assume that all tiling spaces are spaces of one-dimensional tilings.  

Let us show that, under certain conditions, it is possible to add cells to the modified Anderson-Putnam complex $AP'(\TS_{\family,\seq})$ without changing the projective limit.  

Sometimes the addition of a new cell to the complex changes the resulting projective limit.  
To see this, consider the next example, in which the cohomology groups of the projective limits are different.

\begin{example}\alabel{EX:aba-bbab}
The complex for the substitution
\[
\begin{array}{rcl}
a & \to & aba \\
b & \to & bbab
\end{array}
\]
appears in Figure \ref{FIG:complex-aba-bbab}.

\begin{figure}[h]
\begin{center}
\includegraphics[height=\complexheight]{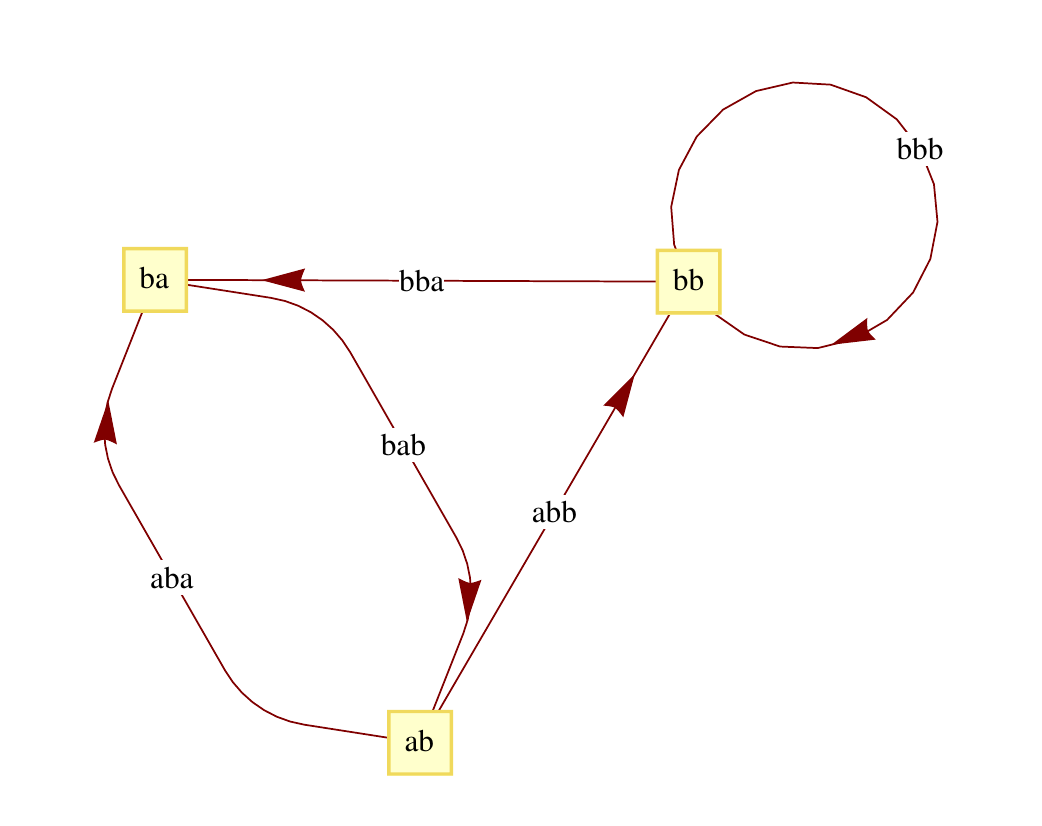}
\end{center}
\caption{$AP(a\to aba, b\to bbab )$\alabel{FIG:complex-aba-bbab}}
\end{figure}

The matrices are
\[
\delta_1 = \left[ 
\begin{array}{rrr}
-1 &  1 &  0 \\
-1 &  0 &  1 \\
 1 & -1 &  0 \\
 0 &  1 & -1 \\
 0 &  0 &  0 
\end{array} \right],
\]
\[
A_0 = \left[
\begin{array}{ccc}
1 & 0 & 0 \\
0 & 1 & 0 \\
0 & 0 & 1 
\end{array}
\right],
\quad A_1 = \left[
\begin{array}{ccccc}
1 & 1 & 1 & 1 & 0 \\
0 & 2 & 1 & 1 & 0 \\
1 & 0 & 2 & 0 & 0 \\
1 & 0 & 1 & 1 & 1 \\
0 & 1 & 1 & 1 & 1 
\end{array}
\right].
\]

The cohomology groups are
\[
H^0\left(\begin{array}{rcl}a & \to & aba \\  b & \to & bbab \end{array} \right) %
\cong \Z, \qquad %
H^1\left(\begin{array}{rcl}a & \to & aba \\  b & \to & bbab \end{array} \right) %
\cong \Z \left[ \frac{1}{5} \right] \oplus \ \Z \left[ \frac{1}{5}\right].%
\]

If, instead, we used $AP(\{ a,b\})$ to compute the cohomology groups, we would get different matrices
\[
\delta_1 = \left[ 
\begin{array}{rrrr}
 0 &  0 &  0 &  0 \\
-1 &  1 &  0 &  0 \\
 0 & -1 &  1 &  0 \\
 0 & -1 &  0 &  1 \\
 1 &  0 & -1 &  0 \\
 0 &  1 & -1 &  0 \\
 0 &  0 &  1 & -1 \\
 0 &  0 &  0 &  0 
\end{array} \right],
\]
\[
A_0 = \left[
\begin{array}{cccc}
1 & 0 & 0 & 0 \\
0 & 1 & 0 & 0 \\
0 & 0 & 1 & 0 \\
0 & 0 & 0 & 1 
\end{array}
\right],
\quad A_1 = \left[
\begin{array}{cccccccc}
0 & 1 & 1 & 0 & 1 & 0 & 0 & 0 \\
0 & 1 & 1 & 0 & 0 & 1 & 0 & 0 \\
0 & 0 & 1 & 1 & 0 & 1 & 1 & 0 \\
0 & 0 & 0 & 2 & 0 & 1 & 1 & 0 \\
0 & 0 & 1 & 0 & 1 & 1 & 0 & 0 \\
0 & 0 & 1 & 0 & 0 & 2 & 0 & 0 \\
0 & 0 & 1 & 0 & 0 & 1 & 1 & 1 \\
0 & 0 & 0 & 1 & 0 & 1 & 1 & 1 
\end{array}
\right].
\]
which result in the following cohomology groups.  
\[
H^0 %
\cong \Z, \qquad %
H^1 %
\cong \Z \left[ \frac{1}{5} \right] \oplus \ \Z \left[ \frac{1}{5}\right] \oplus \ \Z.%
\]

\end{example}

Example \ref{EX:aba-bbab} shows that it is not always possible to add cells to the complex without changing the resulting projective limits.  
Nevertheless, sometimes it is possible, and the question of exactly when it is possible motivates the following definitions.

\begin{defn}\alabel{DEF:self-correcting}
Let $\{ \sub_1,\ldots , \sub_k\}$ be a family of substitutions on an alphabet $\A$, and let $\seq = (\seq_1,\seq_2,\ldots ) \in \{ 1,\ldots ,k\}^\N$ be an infinite sequence.  
Then $(\family, \seq)$ is called \defemph{self-correcting} if there exists $n$ such that, for any $i_0\in \N$ and any two-letter subword $\lrx\lry$ of any word in $\sub_{i_0}\sub_{i_0+1}\cdots \sub_{i_i+n-1}(\A^2)$, there exist $m\in \N$ and $\lrz\in\A$ such that $\lrx\lry$ is a subword of $\sub_{i_0}\sub_{i_0+1}\cdots \sub_{i_0+m}(\lrz)$.  
\end{defn}

The substitution $a\to aba, b\to bbab$ from Example \ref{EX:aba-bbab} is not self-correcting because $aa$ appears in $\sub^n(aa)$ for every $n$, but does not appear in $\sub^m (a)$ or $\sub^m (b)$ for any $m$.  

The proof of the following proposition is straightforward.  
\begin{prop}\alabel{PROP:sc-more-cells}
If $(\family, \seq)$ is self-correcting, then for each $k\geq 1$ there exists $n$ such that, for any $i_0\in \N$ and any $k$-letter subword $\wdw$ of any word in $\sub_{i_0}\sub_{i_0+1}\cdots \sub_{i_0+n-1}(\A^*)$, there exist $m\in \N$ and $\lrz\in\A$ such that $\wdw$ is a subword of $\sub_{i_0}\sub_{i_0+1}\cdots \sub_{i_0+m}(\lrz)$.  
\end{prop}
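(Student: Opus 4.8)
The plan is to prove the statement directly, without inducting on $k$: the idea is to lump together enough consecutive substitutions so that each single-letter block becomes longer than $k$, after which a $k$-letter word can meet at most two consecutive such blocks and is thus determined by a two-letter word one level up, which is precisely the case handled by the hypothesis of self-correction (Definition \ref{DEF:self-correcting}).

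First I would fix the grouping. Since the substitutions are expanding, with $\lambda_0 > 1$ the least of the inflation factors of $\sub_1,\dots,\sub_k$ and $\ell_{\min},\ell_{\max}$ the least and greatest tile lengths, the total length of $\sub_{i_0}\sub_{i_0+1}\cdots\sub_{i_0+t-1}(\lrx)$ is at least $\lambda_0^t\ell_{\min}$, hence it consists of at least $\lambda_0^t\ell_{\min}/\ell_{\max}$ letters; so there is an integer $t$, independent of $i_0$ and of $\lrx$, with $|\sub_{i_0}\cdots\sub_{i_0+t-1}(\lrx)|\ge k$ for all $\lrx\in\A$ and all $i_0\in\N$. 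Let $n_0$ be the constant supplied by Definition \ref{DEF:self-correcting} and set $n=t+n_0$; I claim this $n$ works. Given a $k$-letter subword $\wdw$ of a word in $\sub_{i_0}\sub_{i_0+1}\cdots\sub_{i_0+n-1}(\A^*)$, peel off the last $n_0$ substitutions: $\wdw$ is a subword of $\sub_{i_0}\cdots\sub_{i_0+t-1}(\wdv)$, where $\wdv=\sub_{i_0+t}\cdots\sub_{i_0+t+n_0-1}(\wdu)$ for some $\wdu\in\A^*$. The word $\sub_{i_0}\cdots\sub_{i_0+t-1}(\wdv)$ is the concatenation, over the letters $v_j$ of $\wdv$, of the blocks $B_j:=\sub_{i_0}\cdots\sub_{i_0+t-1}(v_j)$, each of length at least $k$. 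If $\wdw$ met three consecutive blocks it would contain an entire interior block together with at least one letter on either side, forcing $|\wdw|\ge k+2$; so $\wdw$ meets at most two consecutive blocks and is therefore a subword of $\sub_{i_0}\cdots\sub_{i_0+t-1}(\lrx\lry)$ for some two-letter subword $\lrx\lry$ of $\wdv$ (with obvious minor adjustments when $\wdw$ sits against a boundary of $\wdv$, or $\wdv$ is very short, where one replaces $\wdv$ by a neighbouring or repeated letter).

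Finally I would invoke self-correction: since $\wdv$ is a word in $\sub_{i_0+t}\cdots\sub_{(i_0+t)+n_0-1}(\A^*)$, its two-letter subword $\lrx\lry$ is a two-letter subword of a word in $\sub_{i_0+t}\cdots\sub_{(i_0+t)+n_0-1}(\A^2)$, so Definition \ref{DEF:self-correcting} applied with starting index $i_0+t$ yields $m'\in\N$ and $\lrz\in\A$ with $\lrx\lry$ a subword of $\sub_{i_0+t}\cdots\sub_{i_0+t+m'}(\lrz)$; applying $\sub_{i_0}\cdots\sub_{i_0+t-1}$ to this containment shows $\wdw$ is a subword of $\sub_{i_0}\cdots\sub_{i_0+t+m'}(\lrz)$, so $m=t+m'$ and this $\lrz$ do the job. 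The only step that I expect to need genuine care is the choice of $t$ uniformly in $i_0$, which rests on expansiveness (equivalently, on primitivity of $(\family,\seq)$); the end-effects in the reduction to two letters are routine bookkeeping, consistent with the proposition being stated as straightforward.
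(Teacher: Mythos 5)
The paper gives no proof of this proposition---it is simply declared straightforward---so there is nothing to compare against line by line; judged on its own terms, your argument is correct and is surely the intended one. Grouping the first $t$ substitutions so that every letter's image under $\sub_{i_0}\cdots\sub_{i_0+t-1}$ has at least $k$ letters, observing that a contiguous $k$-letter word therefore meets at most two of the resulting blocks (three would force length at least $k+2$), and then feeding the corresponding two-letter subword of the intermediate word into Definition \ref{DEF:self-correcting} with starting index $i_0+t$ is exactly the natural reduction; your uniform choice of $t$ via the inflation factors, and your handling of the one-block and short-intermediate-word edge cases (where the single-block case already gives the conclusion directly with $\lrz=v_j$ and $m=t-1$, and a lone letter of $\wdu$ can be doubled to land in $\A^2$), are sound.
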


The significance of the self-correcting condition is that, if $(\family, \seq)$ is self-correcting, then we may add cells to the modified Anderson-Putnam complex $AP'(\family,\seqshift^i\sub)$ without changing the resulting projective limit.  

\begin{thm}\alabel{THM:add-cells}
Let $\family = \{ \sub_1,\ldots , \sub_k\}$ be a family of substitutions on an alphabet $\A$, let $\seq = (\seq_1,\seq_2,\ldots )\in \{ 1,\ldots , k\}^\N$ be an infinite sequence, and suppose that $(\family,\seq)$ is self-correcting.  
Let $\shift_i'$ denote the map induced by $\sub_i$ on $AP'(\family,\seqshift^i \seq)$, and let $\shift_i$ denote the map induced on $AP(\A)$ by $\sub_i$.  
Then 
\[
\varprojlim_{\shift_i'}AP'(\family,\seqshift^i\seq) = \varprojlim_{\shift_i}AP(\A).
\]
\end{thm}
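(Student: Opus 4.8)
The plan is to realise each $AP'(\family,\seqshift^i\seq)$ as a subcomplex of the single fixed complex $AP(\A)$, in a way compatible with the bonding maps, so that $\varprojlim_{\shift_i'}AP'(\family,\seqshift^i\seq)$ is literally a closed subspace of $\varprojlim_{\shift_i}AP(\A)$; the whole content of the theorem is then that, under the self-correcting hypothesis, this subspace is all of $\varprojlim_{\shift_i}AP(\A)$. Arguing combinatorially (as is legitimate in dimension one, where only the cellular data matter), the $0$-cells of $AP'(\family,\seqshift^i\seq)$ are exactly the two-letter words, and its $1$-cells exactly the three-letter words, that occur as subwords of some $\sub_{\seq_{i+1}}\sub_{\seq_{i+2}}\cdots\sub_{\seq_{i+m}}(\lrz)$ with $\lrz\in\A$; moreover the attaching data is the same as in $AP(\A)$, the edge $\lrx_1\lrx_2\lrx_3$ having tail $\lrx_1\lrx_2$ and head $\lrx_2\lrx_3$. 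Hence $AP'(\family,\seqshift^i\seq)$ really is a subcomplex of $AP(\A)$, and $\shift_i$ maps it into $AP'(\family,\seqshift^{i-1}\seq)$: any cell occurring in the image under $\shift_i$ of a cell $w$ of $AP'(\family,\seqshift^i\seq)$ is a subword of $\sub_{\seq_i}(w)$, hence of $\sub_{\seq_i}\sub_{\seq_{i+1}}\cdots\sub_{\seq_{i+m}}(\lrz)$, which is an iterated substitution of a single letter along $\seqshift^{i-1}\seq$. So $\shift_i'$ is just the restriction of $\shift_i$ to the subcomplex, the inclusions assemble into a continuous injection of inverse limits, and it remains only to prove surjectivity.

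For that, let $(x_i)$ be a sequence with $x_i\in AP(\A)$ and $\shift_{i+1}(x_{i+1})=x_i$ for every $i$, and let $c_i$ be the open cell of $AP(\A)$ containing $x_i$, regarded as a word of length $2$ or $3$. The key combinatorial observation is that $c_i$ is always a subword of $\sub_{\seq_{i+1}}(c_{i+1})$: the induced map $\shift_{i+1}$ carries $c_{i+1}$ to an edge-path in $AP(\A)$ (degenerating to a point when $c_{i+1}$ is a vertex) whose $0$- and $1$-cells are precisely certain subwords of $\sub_{\seq_{i+1}}(c_{i+1})$ --- this is the explicit description of $\shift_{i+1}$ on cells underlying the matrices $A_{0,n}$ and $A_{1,n}$, and it includes the two $1$-cells at the ends of the path, whose outer letter is inherited from $\sub_{\seq_{i+1}}$ of a collaring letter --- and $x_i$ lies in one of these cells. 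Iterating, $c_i$ is a subword of $\sub_{\seq_{i+1}}\sub_{\seq_{i+2}}\cdots\sub_{\seq_{i+N}}(c_{i+N})$ for every $N\geq 1$; since $c_{i+N}\in\A^*$, this exhibits $c_i$, for every $N$, as a subword of a word in $\sub_{\seq_{i+1}}\sub_{\seq_{i+2}}\cdots\sub_{\seq_{i+N}}(\A^*)$.

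Now fix $i$ and let $N$ be the constant produced by Proposition \ref{PROP:sc-more-cells} for $k=|c_i|\in\{2,3\}$; crucially $N$ depends only on $(\family,\seq)$ and $k$, not on $i$. By the previous paragraph $c_i$ is a subword of a word in $\sub_{\seq_{i+1}}\cdots\sub_{\seq_{i+N}}(\A^*)$, so Proposition \ref{PROP:sc-more-cells}, applied with $i_0=i+1$, provides $m\in\N$ and $\lrz\in\A$ with $c_i$ a subword of the iterated substitution $\sub_{\seq_{i+1}}\sub_{\seq_{i+2}}\cdots\sub_{\seq_{i+m}}(\lrz)$. That is, $c_i$ is a cell of $AP'(\family,\seqshift^i\seq)$, so $x_i$ lies in the copy of $AP'(\family,\seqshift^i\seq)$ inside $AP(\A)$. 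As this holds for every $i$, the sequence $(x_i)$ is in the image of $\varprojlim_{\shift_i'}AP'(\family,\seqshift^i\seq)$. Thus the injection above is a continuous bijection between compact Hausdorff spaces, hence a homeomorphism, and since its source was built as a subspace of its target, the two inverse limits coincide.

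The step I expect to be the main obstacle is the combinatorial observation in the surjectivity argument: the precise claim that the cell of $AP(\A)$ containing $\shift_{i+1}(x_{i+1})$ is named by a subword of $\sub_{\seq_{i+1}}(c_{i+1})$. Establishing this means describing exactly how the induced map acts on the open cells of the full complex --- especially the two $1$-cells at the ends of an image edge-path, one of whose letters comes from $\sub_{\seq_{i+1}}$ of a collaring letter rather than from $\sub_{\seq_{i+1}}$ of the central letter --- and checking it uniformly in the two cases where $x_{i+1}$ is a vertex and where it is an interior point of an edge. Everything else is either formal bookkeeping about subcomplexes and inverse limits of compacta, or a direct appeal to Proposition \ref{PROP:sc-more-cells}.
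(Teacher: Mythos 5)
Your proposal is correct and follows essentially the same route as the paper: realize each $AP'(\family,\seqshift^i\seq)$ as a subcomplex of $AP(\A)$ to get a continuous injection of inverse limits, then use Proposition \ref{PROP:sc-more-cells} to show that every cell in the image of sufficiently many bonding maps is already a cell of the modified complex, giving surjectivity. Your cell-by-cell bookkeeping just makes explicit the step the paper states as ``$\shift_{j}\cdots\shift_{j+n-1}$ sends $AP(\A)$ to $AP'(\family,\seqshift^{j-1}\seq)$.''
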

\begin{proof}
Let us write 
\[
\varprojlim_{\shift_i'}AP'(\family,\seqshift^i\seq) = \{ (x_1, x_2, \ldots ) \in \prod_{i\geq 0}AP'(\family,\seqshift^i\seq) : x_i = \shift_i' (x_{i+1}) \ \forall \  i\},
\]
and similarly for $\varprojlim_{\shift_i}AP(\A)$.  

Then $AP'(\family,\seqshift^i\seq)$ is a subspace of $AP(\A)$, so the identity map is a continuous injection of $\varprojlim_{\shift_i'}AP'(\family,\seqshift^i\seq)$ into $\varprojlim_{\shift_i}AP(\A)$.  
Let us show that this map is also surjective.  

Pick a sequence $(x_1, x_2, \ldots ) \in \varprojlim_{\shift_i}AP(\A)$.  
We know from Proposition \ref{PROP:sc-more-cells} that there is $n$ such that, if $\lrx\lry\lrz \in \A^3$ and $j\in\N$, then, for some $m\geq 0$, every three-letter subword of $\sub_{j}\sub_{j+1}\cdots \sub_{j+n-1}(\lrx\lry\lrz)$ occurs as a subword of some word in $\sub_{j}\sub_{j+1}\cdots \sub_{j+m-1}(\A)$.  
But this means that every three-letter subword of $\sub_{j}\sub_{j+1}\cdots \sub_{j+n-1}(\lrx\lry\lrz)$ is an edge in $AP'(\family,\seqshift^{j-1}\seq)$.  
Therefore $\shift_{j}\shift_{j+1}\cdots \shift_{j+n-1}$ sends $AP(\A)$ to $AP'(\family,\seqshift^{j-1}\seq)$.  
Therefore, since $x_{j+n}\in AP(\A)$, we must have $x_{j}\in AP'(\family,\seqshift^{j-1}\seq)$.  

This is true for all $j\in\N$, so $\varprojlim_{\shift_i'}AP'(\family,\seqshift^i\seq) = \varprojlim_{\shift_i}AP(\A)$.  
\end{proof}

\begin{remark}\alabel{REM:smaller-limit}
Even if $(\family, \seq)$ fails to be self-correcting, the space $AP'(\family,$ $\seqshift^i\seq)$ is a subcomplex of $AP(\A)$, and so $H^j(AP(\family,\seqshift^i\seq))$ is a subgroup of $H^j(AP(\A))$.  
Therefore, even if the spaces are not the same, at the level of cohomology we can say that $\varinjlim_{\shift_i'^*}H^j(AP'(\family,\seqshift^i\seq))$ is a subgroup of $\varinjlim_{\shift_i^*}H^j(AP(\A))$.  
\end{remark}

\begin{remark}\alabel{REM:not-conjugate}
Theorems \ref{THM:merge-vertices} and \ref{THM:add-cells} show that the modified Anderson-Putnam complex and the full Anderson-Putnam complex give rise to the same space as the ordinary Anderson-Putnam complex at the level of projective limits.  
In fact, more can be said: for a single substitution, the dynamical systems $(\TS_\sub,\sub)$, $(\varprojlim_{\shift} AP(\sub),\omega)$, $(\varprojlim_{\shift'}AP'(\sub),\omega')$, and $(\varprojlim_{\shift}AP(\A),\omega)$ are all topologically conjugate, where $\omega$ is the right shift map defined by $\omega (x)_i = \shift(x_i)$, and similarly for $\omega'$ (see \cite{AP}, Theorem 4.3).  
\end{remark}

\subsection{Left Collaring}\alabel{SEC:left-collar}

There is yet another simplification that can be made to the Anderson-Putnam complex of a one-dimensional tiling space.  
\begin{defn}\alabel{DEF:left-collared-complex}
Let $\A$ be an alphabet.  
The \defemph{left-collared Anderson-Putnam complex} of $\A$, denoted $AP_{\collar}(\A)$, is the complex obtained from $AP(\A)$ by identifying all edges $\lrx\lry\lrz$ and $\lru\lrv\lrw$ for which $\lrx\lry = \lru\lrv$, and also all vertices $\lrx\lry$ and $\lru\lrv$ for which $\lrx = \lru$.  
\end{defn}
\begin{remark}
The complex $AP_{\collar}(\A)$ has a very simple description.  
It is a directed graph, the edges of which are two-letter words $\lrx\lry\in\A^2$ and the vertices of which are letters $\lrz\in\A$.  
The head and tail of $\lrx\lry$ are $\lry$ and $\lrx$ respectively.  

Given a family $\family$ of substitutions on an alphabet $\A$ and an infinite sequence $\seq$, similar left-collared complexes $AP_\collar(\family,\seqshift^i\seq)$ and $AP_\collar'(\family,\seqshift^i\seq)$ can be constructed as quotients of $AP(\family,\seqshift^i\seq)$ and $AP'(\family,\seqshift^i\seq)$ respectively, and $\sub_{\seq_i}$ induces maps on all of these complexes.  

Of course, one could also define right-collared Anderson-Putnam complexes, and obtain for them results analogous to Proposition \ref{PROP:left-collared}.  
\end{remark}

%\begin{remark}\alabel{REM:not-conjugate}
%Theorems \ref{THM:merge-vertices} and \ref{THM:add-cells} show that the modified Anderson-Putnam complex and the full Anderson-Putnam complex give rise to the same space as the ordinary Anderson-Putnam complex at the level of projective limits.  
%In fact, more can be said: for a single substitution, the dynamical systems $(\TS_\sub,\sub)$, $(\varprojlim_{\shift} AP(\sub),\omega)$, $(\varprojlim_{\shift'}AP'(\sub),\omega')$, and $(\varprojlim_{\shift}AP(\A),\omega)$ are all topologically conjugate, where $\omega$ is the right shift map defined by $\omega (x)_i = \shift(x_i)$, and similarly for $\omega'$ (see \cite{AP}, Theorem 4.3).  
%\end{remark}

%The left-collared complex gives rise to the same Cech cohomology groups as the ordinary Anderson-Putnam complex, but does not produce a topologically conjugate dynamical system.  
%To see this, it suffices to look at the square of the Fibonacci substitution.  
%\[
%\sub : \begin{array}{ll}
%a & \mapsto ab\\
%b & \mapsto bab
%\end{array}.
%\]
%The Fibonacci space is very well understood.   
%It does not contain a pair of sequences that is asymtotic to the left, but not to the right.  
%On the other hand, the induced substitution on left-collared tiles does produce such a pair, which implies that the two dynamical systems are not topologically conjugate.  

Let us now introduce some notation relating to Cech cohomology.  
If $X$ is a topological space with the structure of a CW-complex, let $\cellc$ denote a cell in $X$, and let $\cochainc$ denote the corresponding cochain.  
Let $C^d(X)$ denote the group of $d$-cochains of $X$.  
If $\psi$ is a cellular map with domain $X$, let $\psi(d)$ denote the induced map on $d$-cells.  

The proof of Proposition \ref{PROP:left-collared} below relies on the theory of quotient cohomology, which is introduced in \cite{BS:quotient}.  
Before proving the proposition, let us review some of the relevant notions from this theory.  

The theory of quotient cohomology applies to topological spaces $X$ and $Y$ for which there is a quotient map $f : X \to Y$ such that the pullback $f^*$ is injective on cochains.  
In such a situation, the cochain group $C_Q^d(X,Y)$ is defined to be the quotient $C^d(X)/f^*(C^d(Y))$.  
The usual coboundary operator sends $C_Q^d(X,Y)$ to $C_Q^{d+1}(X,Y)$, and the \defemph{quotient cohomology} $H_Q^d(X,Y)$ is defined to be the kernel of the coboundary modulo the image.  
Then the short exact sequence of cochain complexes\\
\centerline{
\xymatrix{
0 \ar[r] & 
C^d(Y) \ar[r]^{f^*} & %
C^d(X) \ar[r] & %
C_Q^d(X,Y) \ar[r] & %
0
}
}
induces the long exact sequence\\
\centerline{
\xymatrix{
\cdots \ar[r] & %
H_Q^{d-1}(X,Y) \ar[r] & %
H^{d}(Y) \ar[r]^{f^*} & %
H^{d}(X) \ar[r] & %
H_Q^{d}(X,Y) \ar[r] & %
\cdots %
}
}

It is this long exact sequence that will enable us to show that $f^* : H^d(Y)\to H^d(X)$ is an isomorphism.

\begin{prop}\alabel{PROP:left-collared}
Let $\family = \{ \sub_1,\ldots,\sub_k\}$ be a family of substitutions on an alphabet $\A$, and let $\seq \in \{ 1, \ldots , k\}^\N$ be an infinite sequence.  
Let $AP_{\collar}'(\family,\seqshift^i\seq)$ and $AP_{\collar}(\A)$ denote the left-collared complexes obtained as quotients of $AP'(\family,\seqshift^i\seq)$ and $AP(\A)$ respectively.  
Let $\shift_{i,\collar}'$ and $\shift_{i,\collar}$ respectively denote the maps induced by $\sub_{\seq_i}$ on these complexes.  
Then the Cech cohomologies of $\varprojlim_{\shift_i'}AP'(\family,\seqshift^i\seq)$ and $\varprojlim_{\shift_{i,\collar}'}AP_{\collar}(\family,\seqshift^i\seq)$ are isomorphic.  
If $(\family, \seq)$ is self-correcting, then the Cech cohomologies of $\varprojlim_{\shift_i'}AP'(\family,\seqshift^i\seq)$ and $\varprojlim_{\shift_{i,\collar}}AP_{\collar}(\A)$ are also isomorphic.  
\end{prop}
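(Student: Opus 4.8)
The plan is to realise left‑collaring as a quotient map at every level of the two inverse systems, run it through the quotient‑cohomology long exact sequence of \cite{BS:quotient}, pass to the direct limit, and thereby reduce both statements to the vanishing of a direct limit of quotient cohomology groups. Concretely, for each $i\geq 0$ the identifications of Definition \ref{DEF:left-collared-complex} give a cellular quotient map $q_i\colon AP'(\family,\seqshift^i\seq)\to AP_\collar'(\family,\seqshift^i\seq)$, which on the combinatorial models sends an edge $\lrx\lry\lrz$ to $\lrx\lry$ and a vertex $\lrx\lry$ to $\lrx$. Since every appearing two‑letter word extends on the right to an appearing three‑letter word, and every appearing letter is the first letter of an appearing two‑letter word, $q_i$ is onto on cells, so $q_i^*$ is injective on cochains and the quotient‑cohomology theory applies to the pair $(AP'(\family,\seqshift^i\seq),AP_\collar'(\family,\seqshift^i\seq))$. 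A direct check gives $q_{i-1}\circ\shift_i'=\shift_{i,\collar}'\circ q_i$, so the defining short exact sequences of cochain complexes form a commutative ladder under the substitution maps; taking the direct limit over $i$ (exact, for abelian groups) and using that the cohomology of such an inverse limit of complexes is the direct limit of their cohomologies (as in Theorem \ref{THM:direct-limit}), one obtains a long exact sequence relating $H^*(\varprojlim_{\shift_{i,\collar}'}AP_\collar'(\family,\seqshift^i\seq))$, $H^*(\varprojlim_{\shift_i'}AP'(\family,\seqshift^i\seq))$, and the groups $Q^d:=\varinjlim_i H_Q^d(AP'(\family,\seqshift^i\seq),AP_\collar'(\family,\seqshift^i\seq))$. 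As all the complexes here are graphs, $Q^d=0$ for $d\geq 2$ automatically, so the first assertion reduces to proving $Q^0=Q^1=0$.

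The engine for that is the following claim: for each $j$, if $m$ is chosen (using primitivity) so that $|\sub_{(m)}(\lrx)|\geq 2$ for every letter $\lrx$, where $\sub_{(m)}:=\sub_{\seq_{j+1}}\cdots\sub_{\seq_{j+m}}$, then pullback along the induced map $\Phi_{j,m}\colon AP'(\family,\seqshift^{j+m}\seq)\to AP'(\family,\seqshift^j\seq)$ sends every class of $H_Q^d$ at level $j$ to $0$ at level $j+m$; this forces $Q^d=0$, since every element of $Q^d$ comes from some level and dies $m$ steps later. For $d=1$: the edge $\lrx\lry\lrz$ is sent by $\Phi_{j,m}$ to the path made of the length‑three windows of $\sub_{(m)}(\lrx)\sub_{(m)}(\lry)\sub_{(m)}(\lrz)$ whose centre lies in the block $\sub_{(m)}(\lry)$; its first window depends only on the pair $(\lrx,\lry)$, its last only on $(\lry,\lrz)$, and every interior window only on $\lry$. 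Hence for a $1$‑cochain $\psi$ at level $j$, $(\Phi_{j,m}^*\psi)(\lrx\lry\lrz)$ is a term depending only on the tail $\lrx\lry$ of the edge (the first window together with the interior ones), which lies in $q_{j+m}^*C^1$, plus a term depending only on the head $\lry\lrz$; the latter is the coboundary of the $0$‑cochain it defines on vertices, modulo another term depending only on $\lrx\lry$. Thus $\Phi_{j,m}^*\psi\in q_{j+m}^*C^1+\delta C^0$, so its class in $H_Q^1$ is zero.

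For $d=0$: the vertex $\lru\lrv$ is sent by $\Phi_{j,m}$ to the vertex $\lry\lrz$, with $\lry$ the last letter of $\sub_{(m)}(\lru)$ and $\lrz$ the first letter of $\sub_{(m)}(\lrv)$. Let $\phi$ represent a class in $H_Q^0$ at level $j$; then $\delta\phi$ depends only on the tail of each edge, i.e.\ for every edge $\lrx'\lry'\lrz'$ at level $j$ the value $\phi(\lry'\lrz')$ is determined by $\lrx'\lry'$. Let $\lrx$ be the letter preceding the last letter of $\sub_{(m)}(\lru)$ — it exists since $|\sub_{(m)}(\lru)|\geq 2$, and it is determined by $\lru$. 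The word $\lrx\lry\lrz$ occurs in $\sub_{(m)}(\lru\lrv)$, which is a legal patch at level $j$ (being $\sub_{(m)}$ of a legal patch at level $j+m$), so $\lrx\lry\lrz$ is an edge at level $j$; therefore $(\Phi_{j,m}^*\phi)(\lru\lrv)=\phi(\lry\lrz)$ is determined by $\lrx\lry$, hence by $\lru$ alone. So $\Phi_{j,m}^*\phi$ is pulled back from a $0$‑cochain on $AP_\collar'(\family,\seqshift^{j+m}\seq)$, and already vanishes in $C_Q^0$. Together with the previous paragraph this gives $Q^0=Q^1=0$, and with it the first isomorphism.

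For the last statement, suppose $(\family,\seq)$ is self‑correcting. Then $AP_\collar'(\family,\seqshift^i\seq)$ is a subcomplex of $AP_\collar(\A)$ — its edges are the appearing two‑letter words, and its vertices, by primitivity, are all of $\A$ — and the self‑correcting condition (Definition \ref{DEF:self-correcting}), already phrased in terms of two‑letter subwords, says exactly that some composite of the maps $\shift_{i,\collar}$ carries $AP_\collar(\A)$ into $AP_\collar'(\family,\seqshift^\bullet\seq)$. The argument of Theorem \ref{THM:add-cells} then gives $\varprojlim_{\shift_{i,\collar}}AP_\collar(\A)=\varprojlim_{\shift_{i,\collar}'}AP_\collar'(\family,\seqshift^i\seq)$, so these spaces have the same Cech cohomology; combining this with the first isomorphism finishes the proof. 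The main obstacle is the engine claim, and within it the case $d=0$: one must exploit that after enough substitutions the last letter of $\sub_{(m)}(\lru)$ together with its predecessor is forced by $\lru$, which is what turns the pulled‑back $0$‑cochain into an honest function of the first letter; the $d=1$ case is the softer interior‑versus‑boundary splitting of the image of an edge. The remaining ingredients — naturality of the long exact sequence under the substitution maps, exactness of direct limits, the identification of Cech cohomology of the inverse limit, and injectivity of each $q_i^*$ on cochains — are all routine.
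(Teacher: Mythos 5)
Your proposal is correct and follows essentially the same route as the paper: the quotient-cohomology long exact sequence of \cite{BS:quotient} in a direct limit over the levels, with the key computation being the splitting of the image of an edge into a part determined by its tail, an interior part determined by its middle letter, and a head-determined part that is absorbed by the coboundary of a $0$-cochain, after first composing enough substitutions so that every letter has image of length at least two. The only real differences are organizational — you kill $H_Q^0$ by showing classes die $m$ steps later (via the look-back at the last two letters of $\sub_{(m)}(\lru)$), where the paper argues $H_Q^0=0$ at each level from strong connectivity, and you reach the self-correcting statement through the collared restricted complexes rather than through the full uncollared one — and both variants are sound.
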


\begin{proof}
By Theorem \ref{THM:add-cells}, if $(\family, \seq)$ is self-correcting, then the projective limits $\varprojlim_{\shift_i}AP(\A)$ and $\varprojlim_{\shift_i'}AP'(\family,\seqshift^i\seq)$ are isomorphic.  
Therefore we can prove the second conclusion of the proposition by showing that the cohomologies of $\varprojlim_{\shift_i}AP(\A)$ and $\varprojlim_{\shift_{i,\collar}}AP_{\collar}(\A)$ are isomorphic.  
Let us prove this, and then describe how to modify the proof to show that the cohomologies of $\varprojlim_{\shift_i'}AP'(\family,\seqshift^i\seq)$ and $\varprojlim_{\shift_{i,\collar}'}AP_{\collar}(\family,\seqshift^i \seq)$ are isomorphic.  

Let $X$ and $Y$ denote $AP(\A)$ and $AP_{\collar}(\A)$ respectively.  
To see that the cohomology groups are isomorphic, note that there is a continuous quotient map $f: AP(\A) \to AP_{\collar}(\A)$, and the pullback of this map is injective on cochains.  
Then this quotient map gives rise to an exact sequence at the level of cohomology.  
The following diagram depicts the direct limit of this exact sequence.  

\centerline{
\xymatrix{
0 \ar[r] & %
H^{0}(Y) \ar[r]^{f^*} \ar[d]^{\shift_{1,\collar}^*(0)} & %
H^{0}(X) \ar[r] \ar[d]^{\shift_{1}^*(0)} & %
H_Q^{0}(X,Y) \ar[r] \ar[d]^{\tilde{\shift}_{1}^*(0)} & %
H^{1}(Y) \ar[r]^{f^*} \ar[d]^{\shift_{1,\collar}^*(1)} & %
H^{1}(X) \ar[r] \ar[d]^{\shift_1^*(1)} & %
H_Q^{1}(X,Y) \ar[r] \ar[d]^{\tilde{\shift}_1^*(1)} & %
0 \\%
0 \ar[r] & %
H^{0}(Y) \ar[r]^{f^*} \ar[d]^{\shift_{2,\collar}^*(0)} & %
H^{0}(X) \ar[r] \ar[d]^{\shift_2^*(0)} & %
H_Q^{0}(X,Y) \ar[r] \ar[d]^{\tilde{\shift}_{2}^*(0)} & %
H^{1}(Y) \ar[r]^{f^*} \ar[d]^{\shift_{2,\collar}^*(1)} & %
H^{1}(X) \ar[r] \ar[d]^{\shift_2^*(1)} & %
H_Q^{1}(X,Y) \ar[r] \ar[d]^{\tilde{\shift}_2^*(1)} & %
0 \\%
& \vdots %
& \vdots %
& \vdots %
& \vdots %
& \vdots %
& \vdots %
& %
}
}

Let us show that $H_Q^0(X,Y) = 0$ and that the column

\centerline{
\xymatrix{
H_Q^{1}(X,Y)  \ar[d]^{\tilde{\shift}_{0}^*(1)} \\%
H_Q^{1}(X,Y)   \ar[d]^{\tilde{\shift}_{1}^*(1)} \\%
 \vdots %
}
}
has inductive limit $0$; this will suffice to show that $\varinjlim_{\shift_i^*(j)}H^j(X) \cong \varinjlim_{\shift_{i,\collar}^*(j)}H^j(Y)$.  

To show that $H_Q^0(X,Y) = 0$, let us show that every cocycle in $C^0(X)$ lies in the image $f^*(C^0(Y))$.  

$X$ is strongly connected, so the cocyles in $C^0(X)$ are generated by the cochain
\[
\sum_{\lru\lrv\in\A^2}(\lru\lrv)'.
\]

If $\lru\in\A$, then the image under $f^*$ of the cochain $\lru'$ is
\[
(\lru *)' := \sum_{\lrv\in\A}(\lru\lrv)'.
\]
But then
\[
\sum_{\lru\lrv\in\A^2}(\lru\lrv)' = \sum_{\lru\in\A}\sum_{\lrv\in\A}(\lru\lrv)' = \sum_{\lru\in\A}(\lru *)'.
\]

Therefore $H_Q^0(X,Y) = 0$.  

In order to show that $\varinjlim_{\tilde{\shift}_i^*(1)}H_Q^1(X,Y) = 0$, let us pick a particular $1$-cell $\lrx\lry\lrz$ in $X$ and show that, modulo coboundaries in $C^1(X)$, the cochain $(\shift_i^*(1))((\lrx\lry\lrz)')$ lies in $f^*(C^1(Y))$.   
This argument requires the assumption that $|\sub_{\seq_i}(\lrv)| > 1$ for all $\lrv\in\A$, which we can always guarantee by passing to the composition of sufficiently many substitutions $\sub_{\seq_i}\sub_{\seq_{i+1}}\cdots \sub_{\seq_{i+n}}$.    
Then we may apply our argument to $(\shift_{i+n}^*(1)\shift_{i+n-1}^*(1)\cdots \shift_i^*(1))((\lrx\lry\lrz)')$ instead of $(\shift_i^*(1))((\lrx\lry\lrz)')$.  

First let us describe the subgroup $f^*(C^1(Y))$.  
Given a $1$-cell $\lru\lrv$ of $Y$, we have that
\[
f^*((\lru\lrv)') = (\lru\lrv *)' := \sum_{\lrw\in\A}(\lru\lrv\lrw)'.
\]
Then $f^*(C^1(Y))$ is the subgroup spanned by all such cochains.  

For each $\lru\lrv\lrw\in\A^3$, let $N_{\lru\lrv\lrw}$ denote the number of occurrences of the $1$-cell $\lrx\lry\lrz$ in $\shift_{\seq_i}(\lru\lrv\lrw)$.  
Then
\[
(\shift_i^*(1))((\lrx\lry\lrz)') = \sum_{\lru\lrv\lrw\in\A^3}N_{\lru\lrv\lrw}(\lru\lrv\lrw)'.
\]

The argument relies on a decomposition of $N_{\lru\lrv\lrw}$.  
For each $\lrv\in\A$, let $n_{\lrv}$ denote the number of occurrences of the word $\lrx\lry\lrz$ in $\sub_{\seq_i}(\lrv)$.  
Then
\[
N_{\lru\lrv\lrw} = n_{\lrv} %
+ \delta_{\sub_{\seq_i}(\lru)_{[-1]}\sub_{\seq_i}(\lrv)_{[1,2]},\lrx\lry\lrz} %
+ \delta_{\sub_{\seq_i}(\lrv)_{[-2,-1]}\sub_{\seq_i}(\lrw)_{[1]},\lrx\lry\lrz}.%
\]

Therefore
\begin{align*}
%\begin{equation*}
%\begin{split}
(\shift_i^*(1))((\lrx\lry\lrz)') & = \sum_{\lru\lrv\lrw\in\A^3}N_{\lru\lrv\lrw}(\lru\lrv\lrw)' \\%
 & = \sum_{\lru\lrv\lrw\in\A^3}  n_{\lrv} (\lru\lrv\lrw)'%
+ \sum_{\lru\lrv\lrw\in\A^3} \delta_{\sub_{\seq_i}(\lru)_{[-1]}\sub_{\seq_i}(\lrv)_{[1,2]},\lrx\lry\lrz} (\lru\lrv\lrw)' \\%
& \qquad + \sum_{\lru\lrv\lrw\in\A^3} \delta_{\sub_{\seq_i}(\lrv)_{[-2,-1]}\sub_{\seq_i}(\lrw)_{[1]},\lrx\lry\lrz} (\lru\lrv\lrw)' \\%
 & = \sum_{\lru\lrv\in\A^2}\sum_{\lrw\in\A}  n_{\lrv} (\lru\lrv\lrw)'%
+ \sum_{\lru\lrv\in\A^2}\sum_{\lrw\in\A} \delta_{\sub_{\seq_i}(\lru)_{[-1]}\sub_{\seq_i}(\lrv)_{[1,2]},\lrx\lry\lrz} (\lru\lrv\lrw)' \\%
& \qquad + \sum_{\lrv\lrw\in\A^2}\sum_{\lru\in\A} \delta_{\sub_{\seq_i}(\lrv)_{[-2,-1]}\sub_{\seq_i}(\lrw)_{[1]},\lrx\lry\lrz} (\lru\lrv\lrw)' \\%
 & = \sum_{\lru\lrv\in\A^2}  n_{\lrv}\sum_{\lrw\in\A} (\lru\lrv\lrw)'%
+ \sum_{\lru\lrv\in\A^2} \delta_{\sub_{\seq_i}(\lru)_{[-1]}\sub_{\seq_i}(\lrv)_{[1,2]},\lrx\lry\lrz}\sum_{\lrw\in\A} (\lru\lrv\lrw)' \\%
& \qquad + \sum_{\lrv\lrw\in\A^2} \delta_{\sub_{\seq_i}(\lrv)_{[-2,-1]}\sub_{\seq_i}(\lrw)_{[1]},\lrx\lry\lrz}\sum_{\lru\in\A} (\lru\lrv\lrw)' \\%
 & = \sum_{\lru\lrv\in\A^2}  n_{\lrv}(\lru\lrv *)' %
+ \sum_{\lru\lrv\in\A^2} \delta_{\sub_{\seq_i}(\lru)_{[-1]}\sub_{\seq_i}(\lrv)_{[1,2]},\lrx\lry\lrz} (\lru\lrv *)' \\%
& \qquad + \sum_{\lrv\lrw\in\A^2} \delta_{\sub_{\seq_i}(\lrv)_{[-2,-1]}\sub_{\seq_i}(\lrw)_{[1]},\lrx\lry\lrz}\sum_{\lru\in\A} (\lru\lrv\lrw)' \displaybreak[0] \\%
\intertext{But $\sum_{\lru\in\A}(\lru\lrv\lrw)' - \sum_{\lru\in\A}(\lrv\lrw\lru)'$ is the image under the coboundary map $\delta$ of the $0$-cochain $(\lrv\lrw)'$, so modulo coboundaries we get}%
(\shift_i^*(1))((\lrx\lry\lrz)') & = \sum_{\lru\lrv\in\A^2}  n_{\lrv}(\lru\lrv *)' %
+ \sum_{\lru\lrv\in\A^2} \delta_{\sub_{\seq_i}(\lru)_{[-1]}\sub_{\seq_i}(\lrv)_{[1,2]},\lrx\lry\lrz} (\lru\lrv *)' \\%
& \qquad + \sum_{\lrv\lrw\in\A^2} \delta_{\sub_{\seq_i}(\lrv)_{[-2,-1]}\sub_{\seq_i}(\lrw)_{[1]},\lrx\lry\lrz}\sum_{\lru\in\A} (\lrv\lrw\lru)' \\%
 & = \sum_{\lru\lrv\in\A^2}  n_{\lrv}(\lru\lrv *)' %
+ \sum_{\lru\lrv\in\A^2} \delta_{\sub_{\seq_i}(\lru)_{[-1]}\sub_{\seq_i}(\lrv)_{[1,2]},\lrx\lry\lrz} (\lru\lrv *)' \\%
& \qquad + \sum_{\lrv\lrw\in\A^2} \delta_{\sub_{\seq_i}(\lrv)_{[-2,-1]}\sub_{\seq_i}(\lrw)_{[1]},\lrx\lry\lrz}(\lrv\lrw *)'.
%\end{split}
%\end{equation*}
\end{align*}

This shows that the cohomologies of $\varprojlim_{\shift_{i,\collar}}AP_\collar(\A)$ and $\varprojlim_{\shift_i}AP(\A)$ are isomorphic.  
To prove the analogous statement for $\varprojlim_{\shift_{i,\collar}'}AP_\collar'(\family,$ $\seqshift^i\seq)$ and $\varprojlim_{\shift_i'}AP'(\family,$ $\seqshift^i\seq)$, we can repeat the same argument, but everywhere we must alter our notation to restrict to $0$-cells and $1$-cells that are actually part of the complexes $AP_\collar'(\family,\seqshift^i\seq)$ and $AP'(\family,\seqshift^i\seq)$.  
\end{proof}

As an immediate consequence of Theorems \ref{THM:direct-limit}, \ref{THM:merge-vertices}, and \ref{THM:add-cells} and Proposition \ref{PROP:left-collared}, we can see that, if a mixed symbolic substitution system $(\family,\seq)$ is self-correcting, it is possible to compute the cohomology groups of its tiling space $\TS_{\family,\seq}$ as direct limits of cohomology groups of $AP_\collar(\A)$ with respect to the bonding maps induced by $\sub_{\seq_i}$.  

\begin{cor}\alabel{COR:cohomology-computation}
Let $\family = \{ \sub_1,\ldots,\sub_k\}$ be a family of substitutions on an alphabet $\A$, and let $\seq \in \{ 1,\ldots , k\}^\N$ be an infinite sequence.  
Let $AP_\collar'(\family,\seqshift^i\seq)$ and $AP_\collar(\A)$ denote the left-collared Anderson-Putnam complexes obtained as quotients of $AP'(\family,\seqshift^i\seq)$ and $AP(\A)$ respectively (see Definition \ref{DEF:left-collared-complex}), and let $\shift_{i,\collar}'$ and $\shift_{i,\collar}$ respectively denote the maps induced by $\sub_{\seq_i}$ on these complexes.  
Then 
\[
H^j(\family,\seq) \cong \varinjlim_{\shift_{i,\collar}'^*} H^j(AP_\collar'(\family,\seqshift^i\seq)).
\]

If $(\family,\seq)$ is self-correcting, then 
\[
H^j(\family,\seqshift^i\seq) \cong \varinjlim_{\shift_{i,\collar}^*} H^j(AP_\collar(\A)).
\]
\end{cor}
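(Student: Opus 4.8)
The plan is to concatenate the results already proved, with continuity of Cech cohomology (the mechanism behind Theorem \ref{THM:direct-limit}) furnishing the final step. Combining Theorem~4.3 of \cite{AP} with Theorem \ref{THM:merge-vertices}, the tiling space $\TS_{\family,\seq}$ is homeomorphic to $\varprojlim_{\shift_i'}AP'(\family,\seqshift^i\seq)$, so that $H^j(\family,\seq)\cong H^j\bigl(\varprojlim_{\shift_i'}AP'(\family,\seqshift^i\seq)\bigr)$. The first conclusion of Proposition \ref{PROP:left-collared} identifies this with $H^j\bigl(\varprojlim_{\shift_{i,\collar}'}AP_{\collar}'(\family,\seqshift^i\seq)\bigr)$. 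Since Cech cohomology sends an inverse limit of compact CW complexes to the direct limit of their cohomologies --- the argument that proves Theorem \ref{THM:direct-limit}, applied here to the left-collared inverse system --- this last group is $\varinjlim_{\shift_{i,\collar}'^*}H^j(AP_{\collar}'(\family,\seqshift^i\seq))$, which is the first claimed isomorphism.

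For the second isomorphism I would first note that the self-correcting property is shift-invariant: the quantification over all starting indices $i_0$ in Definition \ref{DEF:self-correcting} forces $(\family,\seqshift^i\seq)$ to be self-correcting whenever $(\family,\seq)$ is, so it suffices to run the preceding argument with $\seqshift^i\seq$ in place of $\seq$. As before, $H^j(\family,\seqshift^i\seq)\cong H^j\bigl(\varprojlim_{\shift_m'}AP'(\family,\seqshift^{i+m}\seq)\bigr)$; now apply the second conclusion of Proposition \ref{PROP:left-collared} --- whose proof already absorbs Theorem \ref{THM:add-cells}, through the equality $\varprojlim_{\shift_m'}AP'(\family,\seqshift^{i+m}\seq)=\varprojlim_{\shift_m}AP(\A)$ valid under self-correction --- to rewrite this as $H^j\bigl(\varprojlim_{\shift_{m,\collar}}AP_{\collar}(\A)\bigr)$, and use continuity of Cech cohomology once more to obtain $\varinjlim H^j(AP_{\collar}(\A))$, the bonding maps being those induced by $\sub_{\seq_{i+1}},\sub_{\seq_{i+2}},\dots$, i.e.\ the maps denoted $\shift_{i,\collar}^*$.

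I do not anticipate a real obstacle, since every substantive ingredient has already been established; the proof is essentially bookkeeping. The one point that needs a line of care is that the homeomorphisms supplied by Theorems \ref{THM:merge-vertices} and \ref{THM:add-cells}, and the quotient maps of Proposition \ref{PROP:left-collared}, all intertwine the relevant bonding maps, so that the map induced on the direct limit of cohomologies is exactly $\shift_{i,\collar}'^*$ (resp.\ $\shift_{i,\collar}^*$); this is built into those constructions --- compare the relation $f_i\circ\shift_i=\shift_i'\circ f_{i+1}$ in the proof of Theorem \ref{THM:merge-vertices} --- so it needs only to be remarked, not reproved. A second, harmless point is that Cech cohomology is being applied to the left-collared inverse systems; these are again inverse systems of finite CW complexes carrying the quotient topology, so this lies within the scope of the continuity principle used for Theorem \ref{THM:direct-limit}.
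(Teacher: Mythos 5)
Your proposal is correct and follows exactly the route the paper intends: the paper offers no separate proof of Corollary \ref{COR:cohomology-computation}, stating only that it is an immediate consequence of Theorems \ref{THM:direct-limit}, \ref{THM:merge-vertices}, and \ref{THM:add-cells} and Proposition \ref{PROP:left-collared}, and your chain of isomorphisms (tiling space $\cong$ inverse limit of $AP'$ complexes, pass to the left-collared quotient via Proposition \ref{PROP:left-collared}, then apply continuity of Cech cohomology) is precisely that derivation spelled out. Your added remarks on shift-invariance of the self-correcting condition and on the intertwining of bonding maps are sound and only make explicit what the paper leaves implicit.
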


\begin{remark}\alabel{REM:partial-collar}
Dropping to the left-collared complex is really a form of partial collaring, which is described in \cite{S:book}.  
Corollary \ref{COR:cohomology-computation} is really saying that left-collaring works for all one-dimensional subsititution tilings simultaneously at the level of cohomology.  
\end{remark}
\begin{remark}\alabel{REM:b-d}
Theorems \ref{THM:merge-vertices} and \ref{THM:add-cells} describe a way to universalize the Anderson-Putnam complex for one-dimensional mixed substitution tiling spaces over a common alphabet, and Proposition \ref{PROP:left-collared} describes how to simplify the universalized complex.   
There exists another simplified version of the Anderson-Putnam complex, the Barge-Diamond complex (see \cite{BD}), which is dual to our one-sided complex, and thus closely related.  
It is conceivable that also for the Barge-Diamond complex a universal version can be constructed.  
\end{remark}

\begin{remark}\alabel{REM:sf-ubiquity}
%In light of Corollary \ref{COR:cohomology-computation}, it is natural to ask how common the self-correcting condition is.  
%The answer is that it seems to be quite common, at least in the case where $|\family| = 1$.  
%Of the 560 substitution rules obtained by arbitrarily permuting the letters in the words $aabbb$ and $aaabbbbb$ and letting these permuted words be the images of $a$ and $b$ respectively, only four fail to be self-correcting.  
%Of the 1728 substitution rules obtained by arbitrarily permuting the letters in the words $abbc$, $abcc$, and $aabc$ and letting these permuted words be the images of $a$, $b$, and $c$ respectively, only 20 fail to be self-correcting.  

If $|\family |=1$, there is actually no need for self-correcting.  
One could just as well work with the classical Anderson-Putnam complex (or a simplification thereof).  
Where it gets interesting is the case $|\family|>1$, if the Anderson-Putnam complexes of the individual substitutions differ.  
It is in this case that we want to use a larger, common complex, and need the self-correcting condition.  
But even in this case, we may find a complex smaller than the full one, that can still accommodate all substitutions involved, and is invariant under these substitutions.  
In that case, we only need self-correcting with respect to this smaller common complex, which is a weaker requirement.  

The next example shows that the self-correcting condition is not well-behaved; in particular, the class of self-correcting substitutions on $\A$ is not closed under composition.
\end{remark}
\begin{example}\alabel{EX:composition-not-sf}
Consider the two substitutions 
\[
\begin{array}{rcl}
a & \to & bab \\
b & \to & cbc \\
c & \to & cac 
\end{array}
\qquad \textrm{ and } \qquad 
\begin{array}{rcl}
a & \to & cac \\
b & \to & aba \\
c & \to & cbc 
\end{array}.
\]

Both of these substitutions are self-correcting, but the substitutions obtained by composing them (in either order) are not self-correcting.  
\end{example}

\section{The Rank and Structure of $H^1$}\alabel{SEC:rank-bound}

Let $(\family,\seq)$ be a self-correcting mixed symbolic substitution system on an alphabet $\A$.  
In this section, let us discuss the structure of $H^1(\TS_{\family,\seq})$.  

\subsection{The Rank of $H^1$}

In order to compute the rank of $H^1$, it will be convenient to think in terms of $\Q$-tensor products.  
Therefore, let us think of cochains, cocycles, and coboundaries as elements of the rational vector space $C^j(AP(\A)) \otimes \Q$.  

With this point of view, it is clear from Definition \ref{DEF:left-collared-complex} that the group $C^1(AP_\collar(\A))$ generates a rational vector space of dimension $n^2$, where $n = |\A|$.  
The following proposition says that the quotient space $C^1(AP_\collar(\A))\otimes \Q$ modulo the subspace generated by coboundaries has dimension $n^2 - n + 1$.  
\begin{prop}\alabel{PROP:rank-bound}
Let $\A$ be an alphabet, let $n$ denote $|\A|$, and let $AP_\collar(\A)$ denote the left-collared Anderson-Putnam complex of $\A$.  
Then the image of the coboundaries in $C^1(AP_\collar(\A))$ has rational dimension $n - 1$; that is,
\[
\dim_\Q(\delta(C^0(AP_\collar(\A)))\otimes \Q)= n - 1.
\]
\end{prop}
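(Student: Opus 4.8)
The plan is to describe the coboundary map $\delta \colon C^0(AP_\collar(\A)) \to C^1(AP_\collar(\A))$ explicitly in terms of the combinatorial structure of $AP_\collar(\A)$ and then compute the rank of the resulting integer matrix (equivalently, of the associated rational linear map). Recall from the remark after Definition \ref{DEF:left-collared-complex} that $AP_\collar(\A)$ is the directed graph whose vertices are the letters $\lru \in \A$ and whose edges are the two-letter words $\lru\lrv \in \A^2$, with $\lru\lrv$ running from $\lru$ to $\lrv$. Thus $C^0(AP_\collar(\A)) \cong \Z^n$ with basis $\{\lru' : \lru \in \A\}$ and $C^1(AP_\collar(\A)) \cong \Z^{n^2}$ with basis $\{(\lru\lrv)' : \lru\lrv \in \A^2\}$, and the coboundary formula is
\[
\delta(\lru') = \sum_{\lrv \in \A}(\lru\lrv)' - \sum_{\lrv \in \A}(\lrv\lru)',
\]
the edges leaving $\lru$ minus the edges entering $\lru$. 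So the image $\delta(C^0(AP_\collar(\A)))\otimes\Q$ is exactly the span of these $n$ vectors in $C^1\otimes\Q$, and I need to show this span has dimension $n-1$.

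First I would establish the upper bound: the sum $\sum_{\lru\in\A}\delta(\lru') = 0$, since every edge $\lru\lrv$ appears once with a $+$ sign (from $\delta(\lru')$) and once with a $-$ sign (from $\delta(\lrv')$), so the $n$ generators satisfy a linear relation and $\dim_\Q \le n-1$. This is the same observation that the all-ones vector lies in the kernel of the incidence-type map, and it is immediate. Next I would establish the lower bound: the only relations among the $\delta(\lru')$ are multiples of this one, i.e. $\sum_\lru c_\lru \delta(\lru') = 0$ forces all $c_\lru$ equal. To see this, suppose $\sum_\lru c_\lru\delta(\lru') = 0$ and pick distinct letters $\lru, \lrv \in \A$ (the case $n=1$ is trivial, giving dimension $0 = n-1$). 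The coefficient of the basis cochain $(\lru\lrv)'$ in $\sum_\lru c_\lru \delta(\lru')$ is $c_\lru - c_\lrv$ (it receives $+c_\lru$ from $\delta(\lru')$ and $-c_\lrv$ from $\delta(\lrv')$, and these are the only contributions because $\lru \neq \lrv$). Setting this to zero for every ordered pair $(\lru,\lrv)$ with $\lru\neq\lrv$ gives $c_\lru = c_\lrv$ for all $\lru,\lrv$, so the space of relations is one-dimensional and $\dim_\Q(\delta(C^0(AP_\collar(\A)))\otimes\Q) = n - 1$.

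I do not anticipate a serious obstacle here; the result is essentially the statement that the graph $AP_\collar(\A)$ is connected (indeed strongly connected, as noted in the proof of Proposition \ref{PROP:left-collared}), so that the rank of its incidence map is the number of vertices minus one. The only point requiring a little care is the bookkeeping of signs and the observation that for $\lru \neq \lrv$ the edge $\lru\lrv$ gets contributions from exactly two of the generators; the degenerate adjacency—the loop edges $\lru\lru$, which appear in $\delta(\lru')$ with coefficient $+1 - 1 = 0$ and hence contribute nothing—should be mentioned so the reader is not confused, but it does not affect the count since one can read off the relation coefficients from the off-diagonal basis cochains alone.
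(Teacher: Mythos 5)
Your proof is correct and follows essentially the same route as the paper: both establish the upper bound from the single relation $\sum_{\lru}\delta(\lru')=0$ and the lower bound by showing the kernel of $\delta$ is one-dimensional, the paper by citing the general fact that the kernel of the coboundary on a strongly connected directed graph is generated by the all-ones $0$-cochain, and you by verifying that fact directly from the coefficient $c_\lru - c_\lrv$ of each off-diagonal edge $(\lru\lrv)'$. (Your sign convention for $\delta$ is the opposite of the paper's, but this does not affect the image or its dimension.)
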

\begin{proof}
Given a $0$-cell $\lrv\in AP_\collar(\A)$, the image of the corresponding cochain under $\delta$ is 
\[
\delta(\lrv') = \sum_{\lru\in\A}(\lru\lrv)' - \sum_{\lrw\in\A}(\lrv\lrw)'.
\]

The elements $\{\delta(\lrv') : \lrv\in\A\}$ generate $\delta(C^0(AP_\collar(\A)))\otimes\Q$, so this subspace has dimension at most $n$.  

The existence of the relation
\[
\sum_{\lrv\in\A}\delta(\lrv') = \sum_{\lrv\in\A}\sum_{\lru\in\A}(\lru\lrv)' - \sum_{\lrv\in\A}\sum_{\lrw\in\A} (\lrv\lrw)' = 0
\]
means that this subspace has dimension no greater than $n - 1$.  
In fact, up to scalar multiplication, this is the only relation between elements of $\delta(C^0(AP_\collar(\A)))$.  
This is because the kernel of $\delta$ is generated by $\sum_{\lrv\in\A}\lrv'$.  
(For a general directed graph $G$, the kernel of $\delta$ is generated by elements of the form $\sum_{v\in C}v'$, where $C$ is a connected component of $G$.  
Since $AP_\collar(\A)$ is strongly connected, in this case the kernel of $\delta$ is singly-generated.)  

Since there is, up to scalar multiplication, only one non-trivial relation in $\spa_\Q\{ \delta(\lrv') : \lrv\in\A\}$, this space has dimension exactly $n - 1$.
\end{proof}
\begin{cor}\alabel{COR:rank-bound}
Let $(\family,\seq)$ be a mixed symbolic substitution system on an alphabet $\A$, and let $n$ denote $|\A|$.  
Then the rank of $H^1(\TS_{\family,\seq})$ is less than or equal to $n^2 - n + 1$.
\end{cor}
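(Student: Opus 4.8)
\subsection*{Proof proposal}

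The plan is to exhibit $H^1(\TS_{\family,\seq})$ as a direct limit of the cohomology groups of explicit finite graphs, to bound the rank of each of those groups by $n^2-n+1$ using essentially the dimension count already carried out in Proposition \ref{PROP:rank-bound}, and then to invoke the elementary fact that a direct limit of abelian groups has torsion-free rank at most the supremum of the ranks of its terms. By the first assertion of Corollary \ref{COR:cohomology-computation},
\[
H^1(\TS_{\family,\seq}) \;\cong\; \varinjlim_i H^1\bigl(AP_\collar'(\family,\seqshift^i\seq)\bigr),
\]
with bonding maps induced by the substitutions $\sub_{\seq_i}$, so it suffices to show that $\operatorname{rank} H^1\bigl(AP_\collar'(\family,\seqshift^i\seq)\bigr)\le n^2-n+1$ for every $i$. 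The statement about direct limits holds because $-\otimes\Q$ commutes with $\varinjlim$: finitely many elements of $\varinjlim(V_j\otimes\Q)$ are images of elements of a single $V_j\otimes\Q$, so if their number exceeds $\dim_\Q(V_j\otimes\Q)$ they are linearly dependent there, hence in the limit; thus $\dim_\Q\bigl((\varinjlim V_j)\otimes\Q\bigr)\le\sup_j\dim_\Q(V_j\otimes\Q)$.

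Now fix $i$ and write $G = AP_\collar'(\family,\seqshift^i\seq)$. Then $G$ is a finite directed graph which is a subcomplex of $AP_\collar(\A)$: its edge set is contained in the set $\A^2$ of two-letter words, so $G$ has at most $n^2$ edges, while by the primitivity hypothesis (condition 2 of Remark \ref{REM:mixed-conditions}) every letter of $\A$ occurs, so $G$ has exactly $n$ vertices and is connected. Since $G$ carries no $2$-cells, $H^1(G) = C^1(G)/\delta(C^0(G))$, and the computation in the proof of Proposition \ref{PROP:rank-bound} goes through for $G$: the cochains $\delta(\lrv')$, as $\lrv$ ranges over the vertices, span $\delta(C^0(G))\otimes\Q$; they satisfy the relation $\sum_\lrv\delta(\lrv')=0$; and connectedness of $G$ forces $\ker\delta$ to be generated by the single cochain $\sum_\lrv\lrv'$. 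Hence $\dim_\Q\bigl(\delta(C^0(G))\otimes\Q\bigr) = n-1$ and
\[
\operatorname{rank} H^1(G) \;=\; \bigl(\#\,\text{edges of }G\bigr) - (n-1) \;\le\; n^2 - n + 1.
\]
Together with the first paragraph this yields $\operatorname{rank} H^1(\TS_{\family,\seq})\le n^2-n+1$. When $(\family,\seq)$ is self-correcting there is a shorter route: the second assertion of Corollary \ref{COR:cohomology-computation} identifies $H^1(\TS_{\family,\seqshift^i\seq})$ with $\varinjlim H^1(AP_\collar(\A))$, each term of which has rank exactly $n^2-n+1$ by Proposition \ref{PROP:rank-bound}.

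The only step that genuinely requires attention is the dimension count $\dim_\Q(\delta(C^0(G))\otimes\Q)=n-1$ for the (possibly proper) subcomplex $G$ rather than for $AP_\collar(\A)$ itself: here one really uses the standing hypotheses of Remark \ref{REM:mixed-conditions}, and primitivity in particular, to know both that $G$ has all $n$ letters as vertices and that $G$ is connected, so that $\ker\delta$ is singly generated --- without connectedness the bound would only be $n^2$. Everything else is the formal manipulation of ranks and direct limits indicated above.
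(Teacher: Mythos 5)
Your proof is correct, and it rests on the same three pillars as the paper's own argument: Corollary \ref{COR:cohomology-computation}, the dimension count of Proposition \ref{PROP:rank-bound}, and the observation that tensoring with $\Q$ commutes with direct limits, so rank cannot grow in the limit. The one place you genuinely diverge is in \emph{which} complex gets bounded. The paper bounds the rank of $H^1(AP_\collar(\A))$ itself --- where Proposition \ref{PROP:rank-bound} applies verbatim, since the full left-collared complex has exactly $n^2$ edges, $n$ vertices, and is trivially strongly connected --- and then invokes Remark \ref{REM:smaller-limit} to realize $H^1(\TS_{\family,\seq})$ as a subgroup of $\varinjlim H^1(AP_\collar(\A))$. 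You instead start from the isomorphism $H^1(\TS_{\family,\seq})\cong\varinjlim H^1(AP_\collar'(\family,\seqshift^i\seq))$ in the first part of Corollary \ref{COR:cohomology-computation} and redo the edge/vertex count on each (possibly proper) subcomplex; this obliges you to check that each $AP_\collar'(\family,\seqshift^i\seq)$ contains all $n$ letters as vertices and is connected, and you correctly identify primitivity (condition 2 of Remark \ref{REM:mixed-conditions}, a standing hypothesis) as what delivers this, so that $\ker\delta$ on $0$-cochains is singly generated and $\dim_\Q\bigl(\delta(C^0(G))\otimes\Q\bigr)=n-1$. The trade-off: your route yields the slightly sharper term-by-term bound $\#\{\text{edges}\}-(n-1)$ and avoids leaning on Remark \ref{REM:smaller-limit} (whose assertion that $H^1$ of a subcomplex injects into $H^1$ of the ambient graph is stated without proof in the paper), at the cost of having to establish connectedness of the subcomplexes; the paper's route is shorter precisely because the full complex requires no such verification.
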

\begin{proof}
Corollary \ref{COR:cohomology-computation}  implies that the matrix $A_1(\sub_{\seq_i})$ of the map induced by $\sub_{\seq_i}$ on $C^1(AP_\collar(\A))$ has size $n^2 \times n^2$, and Proposition \ref{PROP:rank-bound} implies that this size drops down to $(n^2 - n + 1)\times(n^2 - n + 1)$ upon taking the quotient modulo coboundaries.  
Remark \ref{REM:smaller-limit} implies that $H^1(\TS_{\family,\seq})$ is a subgroup of $\varinjlim_{\shift_{i,\collar}^*}(H^1(AP_\collar(\A)))$.  
\end{proof}

The next example shows that the upper bound given in Corollary \ref{COR:rank-bound} is tight; that is, for each $n \geq 1$ and each alphabet $\A$ with size $n$, there exists a substitution $\sub_\A$ for which $H^1(\TS_{\sub_\A})$ has rank $n^2-n+1$.  

\begin{example}\alabel{EX:maximum-rank}
Let $\A = \{\lrx_0,\ldots,\lrx_{n-1}\}$ be an alphabet with $n$ letters.  
Let $\sub_\A$ be the substitution defined by
\[
\sub (\lrx_i) = \left\{ \begin{array}{ll}\lrx_i\lrx_{i+1} & \text{ if } i < n - 1\\
\lrx_{n-1}\lrx_0\lrx_0 & \text{ if } i = n - 1\end{array}\right. .
\]
\end{example}

\begin{prop}\alabel{PROP:maximum-rank}
The rank of $H^1(\TS_{\sub_\A})$ is $n^2-n+1$.
\end{prop}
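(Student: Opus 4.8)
The plan is to show that the substitution $\sub_\A$ is self-correcting, so that Corollary \ref{COR:cohomology-computation} applies and $H^1(\TS_{\sub_\A}) \cong \varinjlim_{\shift_\collar^*} H^1(AP_\collar(\A))$, and then to verify that the map induced by $\sub_\A$ on $H^1(AP_\collar(\A))\otimes\Q$ — a space of dimension $n^2-n+1$ by Proposition \ref{PROP:rank-bound} — is a rational isomorphism. Since the bonding maps in the direct limit are then all invertible over $\Q$, the rational rank of the limit equals $n^2-n+1$; combined with the upper bound of Corollary \ref{COR:rank-bound}, this pins the rank down exactly. (One also checks that no torsion is introduced, but over $\Q$ the rank statement is what is asserted, so it suffices to compute $\dim_\Q$.)

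First I would check self-correcting directly from Definition \ref{DEF:self-correcting}. Every two-letter word $\lrx_i\lrx_j$ occurring in an iterate of $\sub_\A$ must be produced somewhere; the point is that it is produced \emph{inside} the image of a single letter after boundedly many further steps, rather than only straddling a seam that is never interior. Because $\sub_\A(\lrx_i)$ begins with $\lrx_i$ and (for $i<n-1$) ends with $\lrx_{i+1}$, and because $\sub_\A(\lrx_{n-1})=\lrx_{n-1}\lrx_0\lrx_0$ contains $\lrx_0\lrx_0$ in its interior, every admissible two-letter word appears interior to $\sub_\A^2(\lrz)$ for a suitable $\lrz$; a short case analysis on the possible $\lrx_i\lrx_j$ (namely $\lrx_i\lrx_{i+1}$, $\lrx_{n-1}\lrx_0$, $\lrx_0\lrx_0$, $\lrx_0\lrx_1$, and the seam words) gives a uniform $n$, and Proposition \ref{PROP:sc-more-cells} upgrades this to all word lengths.

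Next I would compute the induced map on $C^1(AP_\collar(\A))\otimes\Q$ modulo coboundaries. The edges of $AP_\collar(\A)$ are the $n^2$ two-letter words; the matrix $A_1(\sub_\A)$ records, for each edge $\lrx_i\lrx_j$, which two-letter words appear in $\sub_\A(\lrx_i\lrx_j)=\sub_\A(\lrx_i)\sub_\A(\lrx_j)$. These words are the ``interior'' two-letter words of $\sub_\A(\lrx_i)$ and of $\sub_\A(\lrx_j)$ — which for our substitution are $\lrx_i\lrx_{i+1}$ (or, for $i=n-1$, both $\lrx_{n-1}\lrx_0$ and $\lrx_0\lrx_0$) — together with the single seam word $(\sub_\A(\lrx_i)_{[-1]})(\sub_\A(\lrx_j)_{[1]})$, which equals $\lrx_{i+1}\lrx_j$ for $i<n-1$ and $\lrx_0\lrx_j$ for $i=n-1$. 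So each row of $A_1$ has very small support, and the structure is close to triangular with respect to a suitable ordering of the edges. I would exhibit an explicit ordering (e.g. list first the ``letter-image'' edges $\lrx_i\lrx_{i+1}$ and $\lrx_0\lrx_0$, then the remaining seam edges) making $\tilde A_1$, the map induced on the $(n^2-n+1)$-dimensional quotient, block-triangular with invertible diagonal blocks; the determinant is then a nonzero integer (indeed a power of the inflation factor, consistent with the Perron–Frobenius scaling), hence $\tilde A_1$ is a $\Q$-isomorphism.

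I expect the main obstacle to be the bookkeeping in the second step: correctly identifying, modulo the $(n-1)$-dimensional coboundary subspace described in Proposition \ref{PROP:rank-bound}, the image of each basis cochain and choosing a basis of the quotient in which the induced endomorphism is manifestly invertible. The seam contributions couple the edges in a way that must be disentangled carefully, and one has to be sure the relation $\sum_{\lrv}\delta(\lrv')=0$ is the only dependency being quotiented out, so that the count $n^2-n+1$ is not accidentally reduced further. Once the induced matrix is shown to have nonzero determinant, the direct limit argument and the matching upper bound from Corollary \ref{COR:rank-bound} finish the proof immediately.
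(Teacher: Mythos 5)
Your overall strategy coincides with the paper's: verify that $\sub_\A$ is self-correcting so that Corollary \ref{COR:cohomology-computation} applies, then show that the map induced by $\sub_\A$ on $C^1(AP_\collar(\A))$ modulo coboundaries is a rational isomorphism, so that the direct limit has full rank $n^2-n+1$, matching the upper bound of Corollary \ref{COR:rank-bound}. The reduction you describe---most columns of the $n^2\times n^2$ matrix are standard basis vectors because a word $\lrx_i\lrx_j$ with $j\neq i+1$ and $(i,j)\neq(0,0)$ arises only as the seam of the image of the edge $\lrx_{i-1}\lrx_j$---is exactly the paper's Laplace-expansion step.

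The gap is at the decisive point. After stripping those columns and the corresponding rows, one is left with an $(n+1)\times(n+1)$ block $B$ with columns $\{\lrx_i\lrx_{i+1}\}\cup\{\lrx_0\lrx_0\}$ and rows $\{\lrx_{i-1}\lrx_{i+1}\}\cup\{\lrx_{n-1}\lrx_0\}$, and this block is \emph{not} ``manifestly invertible'' or block-triangular with invertible diagonal blocks: each row has two or three nonzero entries arranged cyclically, and its nonsingularity is genuinely delicate. To see that your sketch proves too much, apply it verbatim to the cyclic substitution $\sub_\A'(\lrx_i)=\lrx_i\lrx_{i+1}$ (indices mod $n$): the seam structure is identical, but the residual block there is $I+P$ for a cyclic permutation matrix $P$, whose determinant is $1-(-1)^n$, i.e.\ zero for even $n$---which is why the paper's subsequent remark restricts $\sub_\A'$ to odd $n$. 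The doubled letter in $\sub_\A(\lrx_{n-1})=\lrx_{n-1}\lrx_0\lrx_0$ is precisely what breaks this degeneracy, and the paper has to work for it: it shows the nullspace of $B$ is trivial by chasing the relations $\vectorv_{\lrx_{i-1}\lrx_i}+\vectorv_{\lrx_i\lrx_{i+1}}=0$ around the cycle and deriving a contradiction from the row $\lrx_{n-3}\lrx_{n-1}$. You would need to supply this (or an equivalent) computation. Two smaller corrections: for the left-collared complex the image of the edge $\lrx\lry$ consists of the interior two-letter words of $\sub_\A(\lry)$ together with the single seam word, not also the interior words of $\sub_\A(\lrx)$; and the determinant is not a power of the inflation factor, which here is irrational.
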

\begin{proof}
Throughout the proof, let us perform addition and subtraction of indices modulo $n$.  

First let us observe that $\sub_\A$ is self-correcting.  
To see this, pick $\lrx_i\lrx_j \in \A^2$, and let $m = i - j - 1$.  
Then $\lrx_i\lrx_j$ are the $2^m$th and $(2^m+1)$st letters of $\sub_\A^{m+1} (\lrx_{j-1})$, so $\sub_\A$ is self-correcting because the set of two-letter words that occur as subwords in the iterated substitution of a single letter is the whole set $\A^2$.  

%Therefore, by Corollary\ref{COR:cohomology-computation}, 
%\[
%H^1(\TS_{\sub_\A}) \cong \varinjlim_{\shift^*}H^1(AP_\collar(\A)),
%\]
%where $\shift^*$ is the map induced on $AP_\collar(\A)$ by $\sub_\A$ at the level of cohomology.  

Let $\shift_\collar$ denote the cell map induced on $AP_\collar(\A)$ by $\sub_\A$, and let $A_1$ denote the matrix that describes the cochain map $\shift_\collar^* : C^1(AP_\collar(\A)) \to C^1(AP_\collar(\A))$.  
Then $A_1$ is an $n\times n$ matrix, the rows and columns of which can be indexed by elements of $\A^2$.  
$A_{\lrx\lry, \lru\lrv}$ is the number of occurrences of the $1$-cell $\lru\lrv$ in $\shift_\collar (\lrx\lry)$.  

Let us show that $H^1(\TS_{\sub_\A})$ has maximum possible rank by showing that the matrix $A_1$ is non-singular.  
If $\lrx_i\lrx_j$ is not one of the two-letter words $\lrx_k\lrx_{k+1}$ or $\lrx_0\lrx_0$, then $\lrx_i\lrx_j$ occurs only in $\sub_\A (\lrx_{i-1}\lrx_j)$.  
Therefore the column of $A_1$ corresponding to $\lrx_i\lrx_j$ has a zero in every row except for row $\lrx_{i-1}\lrx_j$, in which there is a one.  
Therefore, using the Laplace expansion of $\det A_1$ along the columns $\{ \lrx_i\lrx_j   : j\neq i+1 \text{ and } (i,j)\neq (0,0)\}$, we can arrive at the simplified determinant
\[
\det A_1 = (-1)^\epsilon \det B,
\]
where $\epsilon = 0$ or $1$ and $B$ is the submatrix of $A_1$ obtained by eliminating the columns $\{ \lrx_i\lrx_j   : j\neq i+1 \text{ and }(i,j)\neq (0,0)\}$ and the rows $\{ \lrx_{i-1}\lrx_j  : j\neq i+1 \text{ and }(i,j)\neq (0,0)\}$.  
What remains are the columns $\{ \lrx_i\lrx_{i+1} : 0\leq i < n\}\cup \{\lrx_0\lrx_0\}$ and the rows $\{\lrx_{i-1}\lrx_{i+1}  : 0\leq i< n\}\cup \{\lrx_{n-1}\lrx_0\}$.  

Let $\vectorv$ be a vector in the nullspace of $B$, and index the entries of $\vectorv$ by $\{ \lrx_{i}\lrx_{i+1}  : 0\leq i < n\}\cup\{\lrx_0\lrx_0\}$.  
If $i\neq n-1$, then multiplying $\vectorv$ by row $\lrx_{i-2}\lrx_i$ of $B$ yields the equation $\vectorv_{\lrx_{i-1}\lrx_i}+\vectorv_{\lrx_i\lrx_{i+1}} = 0$.  
Combining these equations yields $\vectorv_{\lrx_{n-1}\lrx_0} = \pm \vectorv_{\lrx_{n-2}\lrx_{n-1}}$.  

Multiplying $\vectorv$ by row $\lrx_{n-1}\lrx_0$ of $B$ yields the equation $\vectorv_{\lrx_{0}\lrx_0} = - \vectorv_{\lrx_0\lrx_1}$, which we know is equal to  $\vectorv_{\lrx_{n-1}\lrx_0}$.  
Multiplying $\vectorv$ by row $\lrx_{n-3}\lrx_{n-1}$ of $B$ yields the equation $0 = \vectorv_{\lrx_{n-2}\lrx_{n-1}}+\vectorv_{\lrx_{n-1}\lrx_0} + \vectorv_{\lrx_{0}\lrx_0} = \vectorv_{\lrx_{n-2}\lrx_{n-1}}+2\vectorv_{\lrx_{n-1}\lrx_0}$, which contradicts the equation $\vectorv_{\lrx_{n-1}\lrx_0} = \pm \vectorv_{\lrx_{n-2}\lrx_{n-1}}$ unless $\vectorv_{\lrx_{n-1}\lrx_0} = 0$, which in turn implies that $\vectorv = 0$.    

Therefore $B$ is non-singular, so $A_1$ is non-singular.  
Then, the quotient matrix $\tilde{A}_1$ induced by $A_1$ on $C^1(AP_\collar(\A))/\delta(C^0(AP_\collar(\A)))$ is also non-singular, because $\delta(C^0(AP_\collar(\A)))$ generates an invariant subspace for $A_1$, and hence also for $A_1^{-1}$.  
Therefore $\varprojlim_{\tilde{A}_1}\Z^{n^2-n+1}$ has rank $n^2-n+1$.  
\end{proof}

\begin{remark}
If $\A$ has an odd number of letters then, to achieve maximum rank in $H^1$, we can instead use the simpler substitution $\sub_\A'$, defined by
\[
\sub_\A'(\lrx_i) = \lrx_i\lrx_{i+1},
\]
where addition in the indices is done modulo $n$.  
\end{remark}

\subsection{A Subgroup of $H^1$}

Now let us describe a particular subgroup of $C^1(AP_\collar(\A))$ that is invariant under the action of $\shift_{i,\collar}^*(1)$.  
\begin{prop}\alabel{PROP:small-subgroup}
Let $\sub$ be a substitution on an alphabet $\A$.  
Let $\shift$ denote the map induced by $\sub$ on $AP_\collar(\A)$, and let $\shift^*$ denote the cellular map induced by $\shift$.  
Let $G$ denote the subgroup of $C^1(AP_\collar(\A))$ generated by the cochains $(* \lrv)'$, $\lrv\in \A$, where
\[
(* \lrv)' := \sum_{\lru\in\A}(\lru\lrv)'.
\]

Then $\shift^*(G) \subseteq G$.
\end{prop}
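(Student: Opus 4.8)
The plan is to compute $\shift^*\bigl((*\lrv)'\bigr)$ for each $\lrv\in\A$ and to recognise it directly as a $\Z$-linear combination of the generators $(*\lry)'$ of $G$. The organising observation is that $(*\lrv)'$ evaluates to $1$ on every edge of $AP_\collar(\A)$ whose head is $\lrv$ and to $0$ on every other edge; hence a $1$-cochain $c$ lies in $G$ as soon as its value $c(\lrx\lry)$ depends only on the head $\lry$, in which case $c=\sum_{\lry\in\A}c(\lrx\lry)\,(*\lry)'$. So it is enough to show that $\shift^*\bigl((*\lrv)'\bigr)$, evaluated on an arbitrary edge $\lrx\lry$, yields a number not depending on the tail $\lrx$.

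First I would record, exactly as in the proof of Proposition \ref{PROP:left-collared}, the cochain-level description of $\shift$ on $AP_\collar(\A)$. An edge $\lrx\lry$ of $AP_\collar(\A)$ is a tile $\lry$ carrying its left collar $\lrx$; the substitution subdivides it into the tiles of $\sub(\lry)$, and inside the word $\sub(\lrx)\sub(\lry)$ the tile in position $j$ of $\sub(\lry)$ acquires the left collar $\sub(\lry)_{[j-1]}$ if $j\geq 2$ and the left collar $\sub(\lrx)_{[-1]}$ if $j=1$. (In particular no right collar enters, so $\shift$ is well defined on the left-collared complex, and nothing here requires $|\sub(\lrv)|>1$.) Consequently, for an arbitrary edge $\lru\lrv$,
\[
\bigl(\shift^*((\lru\lrv)')\bigr)(\lrx\lry)
=\#\{\,j\geq 2:\sub(\lry)_{[j-1,j]}=\lru\lrv\,\}
+\delta_{\sub(\lrx)_{[-1]}\sub(\lry)_{[1]},\,\lru\lrv}.
\]

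Then summing this over $\lru\in\A$ collapses the first term to $\#\{\,j\geq 2:\sub(\lry)_{[j]}=\lrv\,\}$ and the second to $\delta_{\sub(\lry)_{[1]},\lrv}$, and these add up to the number of occurrences of the letter $\lrv$ in $\sub(\lry)$ --- call it $A_{\lrv\lry}(\sub)$ --- which is manifestly independent of $\lrx$. Therefore $\bigl(\shift^*((*\lrv)')\bigr)(\lrx\lry)=A_{\lrv\lry}(\sub)$ for every edge $\lrx\lry$, so
\[
\shift^*\bigl((*\lrv)'\bigr)=\sum_{\lry\in\A}A_{\lrv\lry}(\sub)\,(*\lry)'\in G,
\]
and since the $(*\lrv)'$ generate $G$ we conclude $\shift^*(G)\subseteq G$. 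The one place needing genuine care is getting the boundary term right in the formula for $\shift$ at the junction between $\sub(\lrx)$ and $\sub(\lry)$; but this is precisely the left-collared analogue of the decomposition of $N_{\lru\lrv\lrw}$ used for Proposition \ref{PROP:left-collared}, and everything else is routine reindexing of sums.
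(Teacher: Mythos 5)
Your proposal is correct and follows essentially the same route as the paper: the paper's entire proof consists of stating the identity $\shift^*((*\lrv)') = \sum_{\lru\in\A}n_{\lru,\lrv}(*\lru)'$, where $n_{\lru,\lrv}$ is the number of occurrences of $\lrv$ in $\sub(\lru)$, which is exactly the formula you derive (with your $A_{\lrv\lry}(\sub)$ in place of $n_{\lry,\lrv}$). You have simply supplied the edge-path computation that the paper leaves implicit, including the correct handling of the junction term, and your observation that the case $|\sub(\lrv)|=1$ causes no trouble is accurate.
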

\begin{proof}
Given $\lru, \lrv\in\A$, let $n_{\lru,\lrv}$ denote the number of occurrences of the letter $\lrv$ in the word $\sub(\lru)$.  
Then
\[
\shift^*((* \lrv)') = \sum_{\lru\in\A}n_{\lru,\lrv}(* \lru)'.
\]
\end{proof}

\subsection{An Example}

The next example shows that interesting things can happen in moving from single substitution tiling systems to mixed substitution tiling systems.  
In particular, in this example the rank of $H^1(\TS_{\family,\seq})$ is seen to vary depending on the choice of $\seq$.  
\begin{example}\alabel{EX:long-example}
Let $\A = \{ a,b,c\}$ and define two substitutions $\sub_1$ and $\sub_2$ by 
\[
\sub_1 : \begin{array}{ll} %
a & \mapsto ab\\%
b & \mapsto bc\\%
c & \mapsto ca%
\end{array}, \qquad
\sub_2 : \begin{array}{ll} %
a & \mapsto bb\\%
b & \mapsto cc\\%
c & \mapsto ac%
\end{array}.
\]

Then $\family := \{ \sub_1,\sub_2\}$ is self-correcting; indeed, if $\psi = \sub_{i_1}\sub_{i_2}\sub_{i_3}\sub_{i_4}\sub_{i_5}$, where $i_1,\ldots, i_5\in \{ 1,2\}$, then the words in $\psi(\A)$ and $\psi(\A^2)$ have the same set of two-letter subwords.  
Also, $\family$ is primitive in the sense of Definition \ref{DEF:mixed-primitive}; that is, $(\TS_{\family,\seq},\R)$ is a minimal dynamical system for each sequence $\seq$.  

Let $A_1$ and $A_2$ respectively denote the matrices of the $1$-cell maps induced by these substitutions on $AP_\collar(\A)$.  
Then
\[
A_1 = \left[ \begin{array}{r@{\ \ }r@{\ \ }r@{\ \ }r@{\ \ }r@{\ \ }r@{\ \ }r@{\ \ }r@{\ \ }r}
0& 1& 0& 1& 0& 0& 0& 0& 0\\
0& 0& 0& 0& 1& 1& 0& 0& 0\\
0& 0& 0& 0& 0& 1& 1& 0& 0\\
0& 1& 0& 0& 0& 0& 1& 0& 0\\
0& 0& 0& 0& 0& 1& 0& 1& 0\\
0& 0& 0& 0& 0& 0& 1& 0& 1\\
1& 1& 0& 0& 0& 0& 0& 0& 0\\
0& 1& 0& 0& 0& 1& 0& 0& 0\\
0& 0& 1& 0& 0& 0& 1& 0& 0
\end{array}\right],\ 
A_2 = \left[ \begin{array}{r@{\ \ }r@{\ \ }r@{\ \ }r@{\ \ }r@{\ \ }r@{\ \ }r@{\ \ }r@{\ \ }r}
0& 0& 0& 0& 2& 0& 0& 0& 0\\
0& 0& 0& 0& 0& 1& 0& 0& 1\\
0& 0& 1& 1& 0& 0& 0& 0& 0\\
0& 0& 0& 0& 1& 0& 0& 1& 0\\
0& 0& 0& 0& 0& 0& 0& 0& 2\\
0& 0& 1& 0& 0& 0& 1& 0& 0\\
0& 0& 0& 0& 1& 0& 0& 1& 0\\
0& 0& 0& 0& 0& 0& 0& 0& 2\\
0& 0& 1& 0& 0& 0& 1& 0& 0
\end{array}\right].
\]

After reducing modulo the images of coboundaries, these become
\[
\tilde{A}_1 = \left[ \begin{array}{rrrrrrr}
0& 1& 0& 0& 0& 0& 0\\
0& 0& 1& 1& 1& 0& 0\\
0& 0& 0& 1& 0& 1& 0\\
0& 0& 1& 0& 0& 0& 1\\
1& 0& 0& 1& 1& 0& 0\\
0& 0& -1& 1& 1& 0& 0\\
0& 0& 0& 0& 1& 0& 0
\end{array}\right],\ 
\tilde{A}_2 = \left[ \begin{array}{rrrrrrr}
0& 0& 2& 0& 0& 0& 0\\
0& 0& 1& 1& 0& 1& 1\\
0& 0& 0& 0& 0& 0& 2\\
0& -1& 0& 1& 1& 0& 1\\
0& 1& 1& 0& 0& 1& 0\\
0& 1& 0& -1& 0& 0& 1\\
0& 0& 0& 0& 1& 0& 0
\end{array}\right].
\]

$\tilde{A}_1$ has rank $7$, but $\tilde{A}_2$ has rank $5$, with a generalized $0$-eigenspace of dimension $4$.  

Consider the following basis $\basisb$ for $\Q^7$.  
\[
\begin{array}{ccccccc}
\vectorv_1 &
\vectorv_2 &
\vectorv_3 &
\vectorv_4 &
\vectorv_5 &
\vectorv_6 &
\vectorv_7 \\
\left[ \begin{array}{r}
 1 \\
 2 \\
 1 \\
 1 \\
 2 \\
 1 \\
 1 %
\end{array}
\right] &
\left[ \begin{array}{r}
 1 \\
 0 \\
-1 \\
-1 \\
 1 \\
 0 \\
 0 %
\end{array}
\right] &
\left[ \begin{array}{r}
-1 \\
-2 \\
-1 \\
-1 \\
 0 \\
 1 \\
 1 %
\end{array}
\right] &
\left[ \begin{array}{r}
 0 \\
-1 \\
 0 \\
-1 \\
 0 \\
 1 \\
 0 %
\end{array}
\right] &
\left[ \begin{array}{r}
 1 \\
 0 \\
 0 \\
 0 \\
 0 \\
 0 \\
 0 %
\end{array}
\right] &
\left[ \begin{array}{r}
 0 \\
 0 \\
 0 \\
 1 \\
 0 \\
 0 \\
 0 %
\end{array}
\right] &
\left[ \begin{array}{r}
 0 \\
 0 \\
 1 \\
 0 \\
 0 \\
-1 \\
 0 %
\end{array}
\right] 
\end{array}.
\]

$\vectorv_1$ is a $2$-eigenvector for both $\tilde{A}_1$ and $\tilde{A}_2$.  
$\vectorv_2$ and $\vectorv_3$ are integer vectors that span the direct sum of the eigenspaces of $\tilde{A}_2$ with the complex eigenvectors $\frac{-1\pm\sqrt{-7}}{2}$.  
Together, $\vectorv_1,\vectorv_2,$ and $\vectorv_3$ are a basis for the subspace spanned by the group $G$ from Proposition \ref{PROP:small-subgroup}.  
Therefore $\spa_\Q \{\vectorv_1,\vectorv_2,\vectorv_3\}$ is invariant for both $\tilde{A}_1$ and $\tilde{A}_2$, so $H^1(\TS_{\family,\seq})$ has rank at least $3$ regardless of $\seq$.  

$\vectorv_4$ and $\vectorv_5$ are $0$-eigenvectors of $\tilde{A}_2$, and $\vectorv_6$ and $\vectorv_7$ are generalized $0$-eigenvectors: $\tilde{A}_2\vectorv_6 = \vectorv_4$ and $\tilde{A}_2\vectorv_7 = \vectorv_5$.  

With respect to this basis, $\tilde{A}_1$ equals
\[
\left[ \begin{array}{rrrrrrr}
2& -\frac{1}{2}& \frac{1}{4}& -\frac{3}{8}& \frac{1}{8}& \frac{1}{2}& -\frac{1}{8}\\
0& \frac{1}{2}& \frac{3}{4}& \frac{3}{8}& -\frac{1}{8}& -\frac{1}{2}& \frac{1}{8}\\
0& -1& \frac{1}{2}& -\frac{1}{4}& \frac{3}{4}& 0& \frac{1}{4}\\
0& 0& 0& -\frac{1}{2}& \frac{1}{2}& 1& -\frac{3}{2}\\
0& 0& 0& 0& -1& -1& 0\\
0& 0& 0& 0& 1& 0& 0\\
0& 0& 0& \frac{1}{2}& \frac{1}{2}& 0& -\frac{1}{2}
\end{array}\right],
\]
and its square and cube, respectively, are
\[
\left[\begin{array}{r@{\ }r@{\ }r@{\ }r@{\ }r@{\ }r@{\ }r}
4& -\frac{3}{2}& \frac{1}{4}& -\frac{7}{8}& \frac{5}{8}& \frac{3}{4}& \frac{3}{8}\\
0& -\frac{1}{2}& \frac{3}{4}& -\frac{1}{8}& \frac{3}{8}& \frac{1}{4}& -\frac{3}{8}\\
0& -1& -\frac{1}{2}& -\frac{1}{4}& -\frac{1}{4}& -\frac{1}{2}& \frac{1}{4}\\
0& 0& 0& -\frac{1}{2}& -\frac{1}{2}& -1& \frac{3}{2}\\
0& 0& 0& 0& 0& 1& 0\\
0& 0& 0& 0& -1& -1& 0\\
0& 0& 0& -\frac{1}{2}& -\frac{1}{2}& 0& -\frac{1}{2}
\end{array}\right] \ \text{and}\ 
\left[\begin{array}{r@{\ }r@{\ }r@{\ }r@{\ }r@{\ }r@{\ \ }r}
8& -3& 0& -\frac{3}{2}& \frac{3}{4}& \frac{5}{4}& \frac{1}{2}\\
0& -1& 0& -\frac{1}{2}& \frac{1}{4}& -\frac{1}{4}& \frac{1}{2}\\
0& 0& -1& 0& -\frac{1}{2}& \frac{1}{2}& 0\\
0& 0& 0& 1& 0& 0& 0\\
0& 0& 0& 0& 1& 0& 0\\
0& 0& 0& 0& 0& 1& 0\\
0& 0& 0& 0& 0& 0& 1
\end{array}\right].
\]

Notice in particular that $\tilde{A}_1^3$ is upper triangular with respect to $\basisb$, while $\tilde{A}_1$ and $\tilde{A}_1^2$ are not.  
Therefore the same can be said of the inverses of these matrices: with respect to $\basisb$, $\tilde{A}_1^{-3}$ is upper triangular, and $\tilde{A}_1^{-2}$ and $\tilde{A}_1^{-1}$ are not.  

This means that $\tilde{A}_1^{-3}\vectorv_4$ and $\tilde{A}_1^{-3}\vectorv_5$ lie in the span of $\{ \vectorv_1,\vectorv_2,\vectorv_3,\vectorv_4,\vectorv_5\}$, which is contained in the range of $\tilde{A}_2$.  
Therefore the range of $\tilde{A}_1^{3i}\tilde{A}_2$ contains two non-zero vectors in the kernel of $\tilde{A}_2$, so $\tilde{A}_2\tilde{A}_1^{3i}\tilde{A}_2$ has a kernel of dimension four.  

On the other hand, $\tilde{A}_1^{-1}\vectorv_4$, $\tilde{A}_1^{-1}\vectorv_5$, $\tilde{A}_1^{-2}\vectorv_4$, and $\tilde{A}_1^{-2}\vectorv_5$ all have non-zero $\vectorv_6$- and $\vectorv_7$-coefficients in their $\basisb$-expansions.  
Since $\vectorv_6$ and $\vectorv_7$ are not in the range of $\tilde{A}_2$, this means that $\tilde{A}_2\tilde{A}_1^{3i+1}\tilde{A}_2$ and $\tilde{A}_2\tilde{A}_1^{3i+2}\tilde{A}_2$ have kernels of dimension two.  

Therefore rank$(H^1(\TS_{\family,\seq})) = $
\begin{enumerate}[(i)]
\item  $7$ if $\seq$ contains only finitely many $2$s.  
\item  $5$ if $\seq$ contains only finitely many subsequences of the form $21^{3i}2$.  
\item  $3$ if $\seq$ contains infinitely many subsequences of the form $21^{3i}2$.  
\end{enumerate}
\end{example}

% END SECTION

\bibliographystyle{abbrv}
\bibliography{bib-cohomology}
\end{document}